% SIAM Article Template
\documentclass[hidelinks,onefignum,onetabnum]{siamart220329}
\usepackage{lipsum}
\usepackage{amsfonts}
\usepackage{graphicx}
\usepackage{epstopdf}
\usepackage{algorithm,algorithmic}
\usepackage{graphicx}
\usepackage{epstopdf}
\usepackage{algorithmic}
\usepackage{verbatim}
\usepackage{multirow}
\usepackage[caption=false]{subfig}
\usepackage{amsmath,booktabs,ctable,threeparttable}
\usepackage{amssymb,boxedminipage}
\usepackage{microtype}
\allowdisplaybreaks[4]
\ifpdf
  \DeclareGraphicsExtensions{.eps,.pdf,.png,.jpg}
\else
  \DeclareGraphicsExtensions{.eps}
\fi

% Add a serial/Oxford comma by default.

% Used for creating new theorem and remark environments
\newsiamremark{remark}{Remark}
\newsiamremark{hypothesis}{Hypothesis}
\crefname{hypothesis}{Hypothesis}{Hypotheses}
\newsiamthm{claim}{Claim}
\newsiamremark{example}{Example}

% Sets running headers as well as PDF title and authors
\headers{Nonlinear Rayleigh--Ritz and refined Rayleigh--Ritz}{Z. Jia and Q. Zheng}

% Title. If the supplement option is on, then "Supplementary Material"
% is automatically inserted before the title.
\title{An analysis of the Rayleigh--Ritz and refined Rayleigh--Ritz methods for
regular nonlinear eigenvalue problems\thanks{Submitted to the editors DATE.
\funding{The first author was supported by the National Natural Science Foundation of China (NSFC) under
grant 12171273.
The second author was supported by the Science Foundation of China University of Petroleum, Beijing under
grant 2462024BJRC012, the State Key Laboratory of Petroleum Resources and Prospecting, China University of Petroleum, and the National Natural Science Foundation
of China (NSFC) under grant 62372467.}}}

% Authors: full names plus addresses.
\author{Zhongxiao Jia\thanks{Corresponding author. Department of Mathematical Sciences, Tsinghua University, 100084 Beijing, China
  (\email{jiazx@tsinghua.edu.cn}).}
\and Qingqing Zheng\thanks{College of Science, China University of Petroleum, Beijing 102249, China
  (\email{zhengqq@cup.edu.cn}).}
}

\usepackage{amsopn}
\DeclareMathOperator{\diag}{diag}

\begin{document}

\maketitle

% REQUIRED
\begin{abstract}
We establish a general convergence theory of the Rayleigh--Ritz method and
the refined Rayleigh--Ritz method
for computing some simple eigenpair $(\lambda_{*},x_{*})$
of a given analytic regular nonlinear eigenvalue problem (NEP).
In terms of the deviation $\varepsilon$ of $x_{*}$
from a given subspace $\mathcal{W}$, we establish a priori
convergence results on the Ritz value, the Ritz vector and the refined Ritz
vector. The results show that, as
$\varepsilon\rightarrow 0$, there exists a Ritz value
that unconditionally converges to $\lambda_*$
and the corresponding refined Ritz vector does so too
but the Ritz vector converges conditionally and it may fail to converge and even may not be unique.
We also present an error bound for the approximate eigenvector
in terms of the computable residual norm of a given approximate
eigenpair, and give lower and upper bounds
for the error of the refined Ritz vector and
the Ritz vector as well as for that of the corresponding residual norms.
These results nontrivially extend some convergence results
on these two methods for the linear eigenvalue problem to the NEP. Examples
are constructed to illustrate the main results.
\end{abstract}

% REQUIRED
\begin{keywords}
Nonlinear eigenvalue problem, Rayleigh--Ritz, refined Rayleigh--Ritz, Ritz value, Ritz vector, refined Ritz vector, convergence, residual norm
\end{keywords}

% REQUIRED
\begin{MSCcodes}
65F15, 15A18, 15A22, 93B20, 93B60
\end{MSCcodes}

\section{Introduction}
Given the nonlinear eigenvalue problem (NEP)
\begin{equation}\label{NEP}
    T(\lambda)x=0,
\end{equation}
where $T(\cdot): \Omega\subseteq\mathbb{C}\rightarrow\mathbb{C}^{n\times n}$ is
a nonlinear analytic or holomorphic matrix-valued function in
$\lambda\in\Omega$\ with $\Omega$ a connected open set, assume that
$T(\lambda)$ is regular, that is, $T(\lambda)$ does not have
identically zero determinant for all $\lambda\in \Omega$. Then
the spectrum of $T(\cdot)$ in $\Omega$ is defined by the set \cite{Guttel,Meerbergen}:
\begin{equation}\label{spect}
\Lambda(T(\cdot)):=\bigg\{\lambda\in \Omega\subseteq\mathbb{C}: \text{det}(T(\lambda))=0\bigg\}.
\end{equation}
We call $\lambda\in \Lambda(T(\cdot))$ an eigenvalue of $T(\cdot)$
and $x$ a right eigenvector associated with $\lambda$. Correspondingly,
a vector $y$ satisfying $y^HT(\lambda)=0$ is
called a left eigenvector, where the superscript $H$ denotes the conjugate
transpose of a matrix or vector.
Every eigenvalue $\lambda$ is isolated; that is, there exists
an open neighborhood of $\lambda$ in $\Omega$ so that $\lambda$ is the unique eigenvalue of $T(\cdot)$
in the neighborhood; see \cite{Neumaier} and Theorem 2.1 of \cite{Guttel}.
Throughout the paper, we assume that the right eigenvectors $x$ are scaled to have unit-length, i.e.,
$\|x\|=1$, where $\|\cdot\|$ is the 2-norm of a matrix or vector.

NEP (\ref{NEP}) arises in many applications, such as dynamic analysis of damped
structural systems \cite{app4}, nonlinear ordinary differential eigenvalue
problems \cite{app3}, stability analysis of linear delayed systems \cite{app2},
acoustic problems with absorbing boundary conditions \cite{Day}, electronic
structure calculation for quantum dots \cite{Betcke}, and stability analysis in
fluid mechanics \cite{app1}. More NEPs and details can be found in, e.g.,
\cite{Higham2,Guttel,Higham1,Mehrmann,Mehrmann2001,Porzio,schreiber}
and the references therein.
There are many differences between NEP (\ref{NEP}) and the regular linear eigenvalue problem  $(A-\lambda B)x=0$.
For example, the number of eigenvalues of NEP can be arbitrary, and eigenvectors associated with distinct eigenvalues may be linearly dependent. More theoretical background on NEP can be found in \cite{Guttel}.

Notice that \eqref{NEP} can be written as
\begin{equation}\label{NEP1}
    T(\lambda)=\sum_{i=0}^{k}f_{i}(\lambda)A_{i}%\ k\leq n^{2}-1,
\end{equation}
for some $k\leq n^2$, where $f_{i}: \Omega\subseteq \mathbb{C}\rightarrow
\mathbb{C}$ are nonlinear analytic functions,
$A_{i}\in \mathbb{C}^{n\times n}$ are constant coefficient matrices. The
polynomial eigenvalue problem (PEP) \cite{Przemieniecki,Rothe,Voss0}
corresponds to $f_{i}(\lambda)=\lambda^{i}, \ i=0, 1, \ldots,k$,  which
includes the quadratic eigenvalue problem (QEP), i.e., $k=2$.
The QEP has wide applications and has been intensively studied
\cite{Hilliges,Lancaster,Leguillon,Mackey,Mehrmann2001,tisseur2002}. Another
class of \eqref{NEP1} is the rational eigenvalue problem (REP), where
$f_{i}(\lambda),\ i=0, 1, \ldots,k$ are rational functions of
$\lambda$. Very importantly, from both perspectives of mathematics and
numerical computations, NEP (\ref{NEP}) can be approximated by rational
matrix functions with arbitrary accuracy \cite{dopico,guttel2022,Guttel,Saad}.
As a result, one can solve PEPs and REPs by, e.g.,
linearizing them to standard and generalized linear eigenvalue problems, and
obtains their eigenvalues and eigenvectors.

Several methods have been available for small to medium sized
NEP (\ref{NEP1}). For instance, Newton-type methods \cite{N1,N2,Ruhe,schreiber}
are popular choices for NEPs, and the QR or QZ method is used when NEPs are
transformed into linear eigenvalue problems \cite{dopico,Saad}. In this paper,
we are concerned with the solution of a large sized (\ref{NEP1}).
Suppose that we want to find some {\em simple} eigenpair
$(\lambda_*,x_*)$ of NEP (\ref{NEP1}); that is, $\text{det}(T(\lambda))=0$ has a simple root at $\lambda
=\lambda_*$. For this kind of NEPs, projection methods are most commonly used
numerical methods, which include the nonlinear Lanczos method, the nonlinear
Arnoldi method, nonlinear Jacobi--Davidson type methods, and some others
\cite{Jacobi,Guttel,schreiber,Voss}.

Given a sequence of low $m$-dimensional subspaces $\mathcal{W} \subset \mathbb{C}^n$,
a general projection method projects NEP \eqref{NEP} onto each $\mathcal{W}$ to obtain a
small projected NEP, solving which for an
approximation to the desired $(\lambda_*,x_*)$. The (orthogonal)
Rayleigh--Ritz method with the same left and right projection
subspaces \cite{Guttel} is a widely used projection method for
extracting approximations to
$\lambda_{*}$ and $x_{*}$ with respect to $\mathcal{W}$.
We describe the method as Algorithm \ref{alg-RR}, where suitable approximations
$\mu$ and $\widetilde{x}$, called the Ritz value and Ritz vector,
are selected to approximate $\lambda_*$ and $x_*$, respectively.

\begin{algorithm}
\caption{The Rayleigh--Ritz method}
\label{alg-RR}
%\begin{algorithmic}
   \begin{enumerate}
   \item Compute an orthonormal basis $W$ of $\mathcal{W}$.
    \item  Form $B(\lambda)=W^{H}T(\lambda)W\in \mathbb{C}^{m\times m}$.
    \item   Solve $B(\lambda) z=0$ with $\|z\|=1$, let ($\mu,z$) be an eigenpair of
        $B(\cdot)$ that
        \begin{equation}\label{ritzvalue}
        \mbox{selects\ } \mu\in \Lambda(B(\cdot))\mbox{\ \ based on user's criteria}.
        \end{equation}
    \item   Take the Ritz pair ($\mu,\widetilde{x}$)=($\mu,Wz$) to
        approximate the desired eigenpair ($\lambda_{*},x_{*}$) of $T(\lambda)x=0$.
    \end{enumerate}
%\end{algorithmic}
\end{algorithm}

It is seen from NEP \eqref{NEP1} that $B(\lambda)$ is naturally analytic in $\lambda\in\Omega$
and can always be formed accurately in exact arithmetic. We may solve the
projected NEP $B(\lambda)z=0$ by a method for
small to medium sized problems, e.g., Newton-type methods or
the more robust QR or QZ algorithm for the matrix or matrix
pencils resulting from PEPs, REPs
themselves and REP approximations of NEPs
\cite{dopico,guttel2022,Saad,vandooren79}.

A few words on the selection strategy \eqref{ritzvalue} of $\mu$. Theoretically,
the Ritz value $\mu$ is selected as the one closest to $\lambda_*$. However,
since $\lambda_*$ is to be sought and unknown, we have to do this by some computationally
viable criteria: If one is interested in some specific $\lambda_*$, e.g.,
the largest one in magnitude, the one with the largest
(or smallest) real part or the one closest to a given target, then we label all the
Ritz values in the corresponding prescribed order, and pick up the Ritz value
$\mu$ as approximation to $\lambda_*$ according to the labeling rule.

We now propose a refined Rayleigh--Ritz method for solving NEP (\ref{NEP}).
For the Ritz value $\mu$, the
method seeks for a unit-length vector $\widehat{x}\in \mathcal{W} $ that
satisfies
\begin{equation}\label{Refin}
    \| T(\mu)\widehat{x}\|=\min_{v\in \mathcal{W},\| v\|=1}\| T(\mu)v\|.
\end{equation}
We call $\widehat{x}$ a refined eigenvector approximation to $x_*$ or
simply the refined Ritz vector corresponding to $\mu$. By definition,
$\widehat{x}=Wy$, where $y$ is the right singular vector of $T(\mu)W$
associated with its smallest singular value. We describe the refined
Rayleigh--Ritz method as Algorithm~\ref{alg-RRR}.

\begin{algorithm}
\caption{The refined Rayleigh--Ritz method}
\label{alg-RRR}
%\begin{algorithmic}
   \begin{enumerate}
   \item Compute an orthonormal basis $W$ of $\mathcal{W}$.
    \item  Form $B(\lambda)=W^{H}T(\lambda)W\in \mathbb{C}^{m\times m}$.
    \item  Solve $B(\lambda) z=0$, and let $\mu$ be the eigenvalue of
    $B(\cdot)$ satisfying \eqref{ritzvalue}.

    \item Solve \eqref{Refin} for $\widehat{x}=Wy$ with $y$ being the right
        singular vector of $T(\mu)W$ associated with its smallest singular
        value, and take the refined Ritz pair ($\mu,\widehat{x}$) to
        approximate ($\lambda_{*},x_{*}$).
    \end{enumerate}
%\end{algorithmic}
\end{algorithm}

How to construct $\mathcal{W}$ effectively is not our concern in this paper.
For instance, if the residual norm $\|T(\mu)\widetilde x\|$ is not yet below a
prescribed convergence tolerance,
then $\mathcal{W}$ can be expanded
by one step of the Newton iteration with the initial guess $(\mu, \widetilde x)$
\cite{Guttel}. In the current context,
we focus on the convergence of the Ritz value $\mu$, the Ritz vector
$\widetilde{x}$ and the refined Ritz vector $\widehat{x}$
under the assumption that the sequence of $\mathcal{W}$'s contain sufficiently accurate
approximations to the desired eigenvector $x_*$; that is, we suppose
that the deviation
\begin{equation}\label{devia}
    \varepsilon=\sin\angle(x_{*},\mathcal{W})=\|
    W_{\bot}^{H}x_{*}\|=\|(I-P_{\mathcal{W}})x_{*}\|,
\end{equation}
of $x_*$ from $\mathcal{W}$ tends to zero as $\mathcal{W}$ changes,
where the columns of $W_{\bot}$ form an orthonormal basis for the orthogonal
complement of $\mathcal{W}$ in $\mathbb{C}^{n}$ and
$P_{\mathcal{W}}$ is the orthogonal projector onto $\mathcal{W}$.

Since $\sin\angle(x_*,v)\geq\varepsilon$ for any $v\in\mathcal{W}$,
any projection method, e.g., the Rayleigh--Ritz and refined
Rayleigh--Ritz methods, definitely
cannot find an accurate approximation to $x_*$ if $\varepsilon$ is
not small enough. In other words, $\varepsilon\rightarrow 0$ is {\em
necessary} for $\widetilde{x}\rightarrow x_*$ and
$\widehat{x}\rightarrow x_*$. Therefore, in the sequel we always
assume that $\varepsilon\rightarrow 0$, only under which it is meaningful to
speak of the convergence of the Rayleigh--Ritz method and the refined
Rayleigh--Ritz method.

Under the preassumption that $\varepsilon\rightarrow 0$,
this paper focuses on the convergence of the Rayleigh--Ritz method and the refined
Rayleigh--Ritz method for NEP~\eqref{NEP}, and considers the convergence of numerous
sequences of quantities or approximations,
including the Ritz values, Ritz vectors and refined Ritz vectors. As will
be seen, under the preassumption that $\varepsilon\rightarrow 0$, the convergence
guarantees of different sequences may or may not require further condition(s),
meaning that their convergence may differ essentially. To show their possible
differences and present the results clearly, before proceeding,
we introduce the following definition.

\begin{definition}
Under the preassumption that $\varepsilon\rightarrow 0$, if a sequence of quantities or
approximations converge without requiring any further condition,
then the sequence is said to converge unconditionally
or its convergence is unconditional; if its convergence guarantee requires some
further condition(s), then it is said to converge conditionally or its convergence is conditional.
\end{definition}

For the linear eigenvalue problem $(A-\lambda I)x=0$, Jia \cite{Jia1995} and
Jia \& Stewart \cite{Jia1999,Jia2001} prove that as
$\varepsilon\rightarrow 0$, the Rayleigh quotient $W^{H}AW$ has an eigenvalue,
i.e., Ritz value, that converges to $\lambda_{*}$ unconditionally but the Ritz vector converges
conditionally and it may fail to converge
and, even worse, it may not be unique even if
$\varepsilon=0$. The refined Rayleigh--Ritz method extracts a normalized
refined eigenvector approximation $\widehat{x}$ that
minimizes the residual formed with $\mu$ over $\mathcal{W}$ and uses it to
approximate $x_*$ \cite{Jia1997,Jia2000,Jia1999}. It is shown in
\cite{Jia1999,Jia2001} that the convergence of $\widehat{x}$ is unconditional; see also the books
\cite{Stewart2001,vorst2002}. Jia \cite{Jia2004} has made a number of
theoretical comparisons of refined Ritz vectors and Ritz vectors for linear
eigenvalue problems. Huang, Jia and Lin \cite{Huang} generalize the
Rayleigh--Ritz method and the refined Rayleigh--Ritz method to the QEP. By
transforming the QEP into a generalized linear eigenvalue problem, they extend
the main convergence results in \cite{Jia1999,Jia2001} on the Rayleigh--Ritz
method and the refined Rayleigh--Ritz method to the
QEP case.  Hochstenbach and Sleijpen \cite{Hoch} extend the harmonic and
refined Rayleigh--Ritz methods to the PEP. Schreiber \cite{schreiber} and
Schwetlick \& Schreiber
\cite{Schwetlick} consider several nonlinear Rayleigh-type functionals for
NEP (\ref{NEP}), and establish error bounds of the approximate eigenvalues
in terms of the angles of eigenvectors and its approximations.
For NEP (\ref{NEP}) derived from finite element analysis of Maxwell's
equation with waveguide boundary conditions, Liao {\em et al}. \cite{Liao} propose a nonlinear
Rayleigh--Ritz method, but they do not analyze the convergence of Ritz values
and Ritz vectors.

We will study the convergence of Ritz values and Ritz vectors obtained by
Algorithm~\ref{alg-RR} and that of refined Ritz vectors defined by
\eqref{Refin}. We will
extend some of the main convergence results in \cite{Jia2004,Jia1999,Jia2001}
to the NEP case and get insight into similarities and dissimilarities of the
Rayleigh--Ritz method and the refined Rayleigh--Ritz method for the linear
eigenvalue problem and the NEP.

As is expected, all the proofs of the results are highly
nontrivial and have fundamental differences with those for
the linear eigenvalue problem.
We establish all the results under the assumption
that $\lambda_*$ is simple. This
assumption has been dropped in the linear eigenvalue case \cite{Jia2001},
but whether or not it can be dropped in the NEP case
is currently unclear and will be left as future work.
Just as in the linear eigenvalue case,
we will prove that $\mu\rightarrow \lambda_*$ unconditionally
as $\varepsilon\rightarrow 0$, and
present an a priori error estimate for $|\mu-\lambda_*|$
in terms of $\varepsilon$, with the accurate convergence order given.
This error estimate straightforwardly extends to
the eigenvalue perturbation problem of a general nonnormal matrix considered
in \cite{kahan1982}. Our detailed analysis will show that one cannot use
a well-known perturbation bound, i.e.,
\cite[Theorem 8]{kahan1982}, to obtain such error estimate since
it alone does not suffice to reveal the precise convergence order of $\mu$, not as
might have been thought. As will be clear later,
Theorem 8 of \cite{kahan1982} only implies how fast $\mu$ converges {\em at least},
while our error estimate shows how fast it converges with the
precise convergence order determined. Therefore,
regarding the convergence order of $\mu$,
our result is more accurate than the perturbation bound in \cite{kahan1982} indicates.

We will establish an a priori error bound for the Ritz vector $\widetilde{x}$
and shed light on its conditional convergence. We will
show that it may fail to converge to $x_*$ as $\varepsilon\rightarrow 0$.
Even worse, we will see that $\widetilde{x}$ may not be unique even if $\varepsilon=0$;
in the case that there are more than one linearly independent Ritz vectors, each of them
is generally a meaningless approximation to $x_*$. As a consequence,
the residual norm of a selected Ritz approximation may not converge to
zero as $\varepsilon\rightarrow 0$ and even may not be small
even if $\varepsilon=0$. These results
indicate that the Rayleigh--Ritz method may fail to compute
$x_*$ for $\varepsilon\rightarrow 0$.

By bounding the error of an arbitrary approximate eigenvector (not necessarily a Ritz or refined Ritz
vector) in terms of its associated computable residual norm,
we will present an approach to identify the convergence of the
approximate eigenvector. The error bound for the
approximate eigenvector to be obtained
will also play a key role in establishing the convergence results on
the refined Rayleigh--Ritz method. We will prove that $\widehat{x}$ must be unique, provided that
$\varepsilon$ is sufficiently small, which is an important property that
$\widetilde{x}$ does not possess. Furthermore,
we will prove that the refined Ritz vector $\widehat{x}$
converges unconditionally and the residual norm of the refined Ritz pair
must tend to zero as $\varepsilon\rightarrow 0$.
Besides, we will establish lower and upper
bounds for the error of $\widehat{x}$ and $\widetilde{x}$, and study
relationships between the residual norms
$\|T(\mu)\widetilde{x}\|$ and $\|T(\mu)\widehat{x}\|$, showing that
$\|T(\mu)\widehat{x}\|<\|T(\mu)\widetilde{x}\|$ holds strictly and
that one may have $\|T(\mu)\widehat{x}\|\ll \|T(\mu)\widetilde{x}\|$
when the refined Ritz pair $(\mu,\widehat{x})\not=(\lambda_*,x_*)$. This
indicates that $\widehat{x}$ is generally more
accurate and can be much more accurate than $\widetilde{x}$.
We will construct examples to confirm these results for several
$\varepsilon$'s. For each $\varepsilon$, we test 100 random perturbations
of size $O(\varepsilon)$ randomly generated in a normal distribution,
and get more insight into the convergence of the Rayleigh--Ritz method
and the refined Rayleigh--Ritz method.

In section \ref{sec-con-Rv}, we study the convergence of the Ritz value
$\mu$. In sections \ref{sec:R}--\ref{sec:RR},
we consider the convergence of the Ritz vector $\widetilde{x}$ and the
refined Ritz vector $\widehat{x}$, respectively.
In section \ref{sec:bound}, we derive lower and upper bounds
for $\sin\angle(\widetilde{x},\widehat{x})$. We
construct examples to illustrate our results. In section~\ref{sec:concl}, we
conclude the paper with further remarks.

\section{Convergence of the Ritz value}\label{sec-con-Rv}

The following result shows that $\lambda_{*}$
is an exact eigenvalue of some perturbed matrix-valued function
of $B(\cdot)$ in Algorithm~\ref{alg-RR}.

\begin{theorem}\label{l1}
For a given $m$-dimensional subspace $\mathcal{W}$,
let $\varepsilon$ be defined by \eqref{devia}. Then for the projected
matrix-valued function $B(\lambda)=W^HT(\lambda) W$, there exists a
matrix-valued function	$E(\lambda):\lambda\in \Omega\subseteq
\mathbb{C}\rightarrow\mathbb{C}^{m\times m}$ satisfying
	\begin{equation}\label{normE}
		\| E(\lambda_{*})\|\leq\frac{\varepsilon}{\sqrt{1-\varepsilon^{2}}}\| T(\lambda_{*})\|
	\end{equation}
	such that $\lambda_{*}$ is an eigenvalue of the perturbed matrix-valued
function $B(\lambda)+E(\lambda)$.
\end{theorem}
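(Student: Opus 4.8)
The plan is to construct $E(\lambda)$ explicitly so that the projected vector $z_* := W^H x_* / \|W^H x_*\|$ becomes an exact eigenvector of $B(\lambda) + E(\lambda)$ at $\lambda = \lambda_*$. The starting point is that $x_*$ satisfies $T(\lambda_*)x_* = 0$, so if we decompose $x_* = P_{\mathcal{W}} x_* + (I - P_{\mathcal{W}}) x_* = W(W^H x_*) + W_\bot (W_\bot^H x_*)$, then applying $T(\lambda_*)$ and multiplying on the left by $W^H$ gives
\begin{equation*}
W^H T(\lambda_*) W (W^H x_*) + W^H T(\lambda_*) W_\bot (W_\bot^H x_*) = 0,
\end{equation*}
i.e. $B(\lambda_*)(W^H x_*) = -W^H T(\lambda_*) W_\bot (W_\bot^H x_*)$. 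Writing $c := W^H x_*$ (so $\|c\| = \sqrt{1-\varepsilon^2}$ by \eqref{devia}) and $r := -W^H T(\lambda_*) W_\bot (W_\bot^H x_*)$, we have $B(\lambda_*) c = r$ with $\|r\| \le \|T(\lambda_*)\|\,\varepsilon$. The idea is then to define $E(\lambda_*)$ as a rank-one correction that maps $c$ to $-r$: for instance $E(\lambda_*) = -\frac{1}{\|c\|^2} r c^H$, so that $(B(\lambda_*) + E(\lambda_*)) c = r - r = 0$, making $\lambda_*$ an eigenvalue of $B(\cdot)+E(\cdot)$ with eigenvector $c/\|c\|$.

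The norm bound then follows from $\|E(\lambda_*)\| = \|r c^H\|/\|c\|^2 = \|r\|/\|c\| \le \varepsilon\,\|T(\lambda_*)\| / \sqrt{1-\varepsilon^2}$, which is exactly \eqref{normE}. The one subtlety is that the statement asks for a matrix-valued \emph{function} $E(\lambda)$ defined on $\Omega$, not just a single matrix at $\lambda_*$; the cleanest fix is to extend $r$ and $c$ to $\lambda$-dependent quantities, e.g. set $r(\lambda) := -W^H T(\lambda) W_\bot W_\bot^H x_*$ and define $E(\lambda) := -\frac{1}{\|c\|^2} r(\lambda) c^H$, which is analytic in $\lambda$ on $\Omega$ because $T(\lambda)$ is, reduces to the above at $\lambda_*$, and makes $\lambda_*$ an eigenvalue of $B(\lambda)+E(\lambda)$ (the value $\det(B(\lambda_*)+E(\lambda_*))=0$ is all that is needed). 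One should also note the degenerate case $\varepsilon = 1$, i.e. $c = 0$; but then the bound \eqref{normE} is vacuous (its right-hand side is $+\infty$), and in any event $\varepsilon \to 0$ is the regime of interest, so we may assume $\varepsilon < 1$ and $c \ne 0$.

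I do not expect a genuine obstacle here: the result is essentially a one-line observation once the splitting $x_* = Wc + W_\bot(W_\bot^H x_*)$ is written down and the residual $r$ is identified. The only things requiring a little care are (i) getting the constant $\sqrt{1-\varepsilon^2}$ right, which comes from $\|W^H x_*\|^2 = \|x_*\|^2 - \|W_\bot^H x_*\|^2 = 1 - \varepsilon^2$, and (ii) phrasing the construction so that $E$ is honestly a function on all of $\Omega$ (or at least on a neighborhood of $\lambda_*$) rather than a single matrix — the rank-one analytic extension above handles this cleanly. An alternative, if one prefers a symmetric-looking $E$, is to also correct the action of $B(\lambda)$ on the orthogonal complement of $c$, but that is unnecessary for the conclusion and would only worsen the constant, so the rank-one choice is preferable.
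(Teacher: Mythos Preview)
Your proposal is correct and is essentially identical to the paper's proof: the paper also decomposes $x_*=Wu+W_\bot u_\bot$, derives $B(\lambda_*)u=-W^HT(\lambda_*)W_\bot u_\bot$, and defines the rank-one analytic perturbation $E(\lambda)=\frac{1}{\sqrt{1-\varepsilon^2}}W^HT(\lambda)W_\bot u_\bot\,\widehat u^H$ with $\widehat u=u/\|u\|$, which coincides with your $-\frac{1}{\|c\|^2}r(\lambda)c^H$. The only cosmetic difference is that the paper normalizes $u$ to $\widehat u$ before forming the rank-one matrix, whereas you absorb the normalization into the scalar $1/\|c\|^2$.
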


\begin{proof}
Recall from Algorithm~\ref{alg-RR} and \eqref{devia}
that the columns of $W$ and $W_{\bot}$ form orthonormal bases of $\mathcal{W}$
and its orthogonal complement in $\mathbb{C}^n$. Let
$u=W^{H}x_{*}$ and $u_{\bot}=W_{\bot}^{H}x_{*}$. Then
$\|u_{\bot}\|=\varepsilon$. Since
$$
1= \|x_{*}\|^{2}=x_{*}^{H}(WW^{H}+W_{\bot}W_{\bot}^{H})x_{*}=\|u\|^{2}+\|u_{\bot}\|^{2},
$$
we have $\|u\|=\sqrt{1-\varepsilon^{2}}$. From $T(\lambda_{*})x_{*}=0$, we obtain
$$
W^{H}T(\lambda_{*})(WW^{H}+W_{\bot}W_{\bot}^{H})x_{*}=0.
$$
Hence
\begin{equation}\label{Bpert}
	B(\lambda_{*})u+W^{H}T(\lambda_{*})W_{\bot}u_{\bot}=0.
\end{equation}
Let $\widehat{u}=\dfrac{u}{\sqrt{1-\varepsilon^{2}}}$ be a normalized approximate eigenvector
of $B(\mu)z=0$, and denote
\begin{equation}\label{defr}
r=B(\lambda_{*})\widehat{u}.
\end{equation}
Then $r$ is the residual of the approximate
eigenpair $(\lambda_*,\widehat{u})$ of $B(\mu)z=0$.
Define the matrix-valued function
$$
E(\lambda)=\dfrac{1}{\sqrt{1-\varepsilon^{2}}}W^{H}T(\lambda)
W_{\bot}u_{\bot}\widehat{u}^{H}.
$$
Then it is justified from \eqref{Bpert} that
$(B(\lambda_{*})+E(\lambda_{*}))\widehat{u}=0$, which indicates
that $(\lambda_*,\widehat u)$ is an exact eigenpair of
$B(\lambda)+E(\lambda)$. Furthermore,
\begin{eqnarray} 	
\| r\|&=&\|-E(\lambda_*)\widehat{u}\| \nonumber\\
&\leq &\|E(\lambda_*)\|   \nonumber\\
&=&\bigg\|\frac{W^{H}T(\lambda_{*})W_{\bot}u_{\bot}}{\sqrt{1-\varepsilon^{2}}}\bigg\|\nonumber\\
&\leq &\frac{\varepsilon}{\sqrt{1-\varepsilon^{2}}}\|T(\lambda_{*})\|,
\label{err}
\end{eqnarray}
which proves (\ref{normE}).
\end{proof}

Theorem~\ref{l1} extends Theorem 4.1 of \cite{Jia2001} for the linear
eigenvalue problem to the Rayleigh--Ritz method for NEP~\eqref{NEP}.
%We comment that
%$\varepsilon/\sqrt{1-\varepsilon^{2}}=\tan\angle(x_*,\mathcal{W})$.
The theorem shows that $B(\lambda_*)+E(\lambda_*)$ is singular and
$E(\lambda_*)\rightarrow 0$ as $\varepsilon\rightarrow 0$.
Therefore, the smallest singular value $\sigma_{\min}(B(\lambda_{*}))$
of $B(\lambda_{*})$ must tend to zero as $\varepsilon\rightarrow 0$.
The following result quantitatively estimates
how near $B(\lambda_*)$ is to singularity, and will be used later.
%Exploiting this result and some others to be established,
%we will derive an a priori bound for $|\mu-\lambda_*|$ in terms of
%$\varepsilon$.
\begin{theorem}\label{C1}
The smallest singular value $\sigma_{\min}(B(\lambda_{*}))$ of
$B(\lambda_{*})$ satisfies
\begin{equation}\label{sigmB}	
\sigma_{\min}(B(\lambda_{*}))\leq\frac{\varepsilon}{\sqrt{1-\varepsilon^{2}}}\| T(\lambda_{*})\|.
\end{equation}
\end{theorem}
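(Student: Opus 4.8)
The plan is to obtain this as an almost immediate corollary of Theorem~\ref{l1}, using the variational (minimax) characterization of the smallest singular value. Recall that for any matrix $M\in\mathbb{C}^{m\times m}$ one has $\sigma_{\min}(M)=\min_{\|v\|=1}\|Mv\|$, and, equivalently, $\sigma_{\min}(M)$ equals the distance (in the spectral norm) from $M$ to the nearest singular matrix.

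First I would take the unit vector $\widehat u=u/\sqrt{1-\varepsilon^{2}}$ constructed in the proof of Theorem~\ref{l1}, which satisfies $\|\widehat u\|=1$, together with the residual $r=B(\lambda_{*})\widehat u$. The proof of Theorem~\ref{l1} already established the bound $\|r\|\le\frac{\varepsilon}{\sqrt{1-\varepsilon^{2}}}\|T(\lambda_{*})\|$ in \eqref{err}. Then, by the variational characterization,
\begin{equation*}
\sigma_{\min}(B(\lambda_{*}))=\min_{\|v\|=1}\|B(\lambda_{*})v\|\le\|B(\lambda_{*})\widehat u\|=\|r\|\le\frac{\varepsilon}{\sqrt{1-\varepsilon^{2}}}\|T(\lambda_{*})\|,
\end{equation*}
which is exactly \eqref{sigmB}.

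Alternatively, and equivalently, I could argue purely through the perturbation statement of Theorem~\ref{l1}: since $B(\lambda_{*})+E(\lambda_{*})$ is singular, $B(\lambda_{*})$ lies within spectral-norm distance $\|E(\lambda_{*})\|$ of a singular matrix, hence $\sigma_{\min}(B(\lambda_{*}))\le\|E(\lambda_{*})\|$, and then \eqref{normE} gives the claimed bound. Either route is a one-line deduction; there is no genuine obstacle here, since all the real work — constructing $E(\lambda)$ and estimating its norm — was done in Theorem~\ref{l1}. The only point worth stating carefully is the standard fact that $\sigma_{\min}$ coincides with the distance to the set of singular matrices, so that a rank-one (indeed, any) perturbation rendering $B(\lambda_{*})$ singular controls $\sigma_{\min}(B(\lambda_{*}))$ from above.
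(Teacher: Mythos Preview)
Your proof is correct and matches the paper's own argument essentially line for line: the paper also invokes the variational characterization $\sigma_{\min}(B(\lambda_{*}))=\min_{\|u\|=1}\|B(\lambda_{*})u\|$, plugs in the unit vector $\widehat u$ from the proof of Theorem~\ref{l1}, and bounds $\|B(\lambda_{*})\widehat u\|=\|r\|$ via \eqref{err}. Your alternative route through the distance-to-singularity characterization is an equivalent rephrasing of the same one-line deduction.
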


\begin{proof}
By definition, \eqref{defr} and \eqref{err}, we have
$$\sigma_{\min}(B(\lambda_{*}))=\min_{\|u\|=1}\|B(\lambda_{*})u\|\leq
\|B(\lambda_{*})\widehat{u}\|=\|r\|\leq\frac{\varepsilon}{\sqrt{1-\varepsilon^{2}}}\| T(\lambda_{*})\|.
$$
~
\end{proof}

For $\mu$ defined in \eqref{ritzvalue}, notice that
$\sigma_{\min}(B(\mu))=0$ and $\sigma_{\min}(B(\lambda))$
is continuous in $\lambda\in\Omega$ and that we suppose
$\mu,\lambda_*\in\Omega$.  By the continuity argument,
Theorem~\ref{C1} shows that $\mu\rightarrow \lambda_*$ unconditionally
as $\varepsilon\rightarrow 0$.
%, even though $\mu$ satisfying $\sigma_{\min}(B(\mu))=0$ may be non-unique.
Next we establish an a priori upper bound for $|\mu-\lambda_*|$ in
terms of $\varepsilon$, and prove how fast $\mu$ converges to $\lambda_*$
unconditionally. To this end, we first consider the convergence of the Ritz value
$\mu$ when NEP~\eqref{NEP} is a linear one.

%If the NEP becomes the linear eigenvalue problem
%$T(\lambda)x=(A-\lambda I)x=0$, then the projected problem is
%$B(\mu)z=(W^HAW-\mu I)z=0$, where $W^HAW$ is the projection matrix of
%$A$ with respect to the subspace $\mathcal{W}$ in the orthonormal basis $W$.
%In this case, we can establish the following a priori error bound,
%which will be extended to the Ritz values for NEP \eqref{NEP}.

\begin{theorem} \label{linearconv}
For the linear eigenvalue problem $T(\lambda_i)x_i=(A-\lambda_i I)x_i=0,\ i=1,2,\ldots,n$ and
$B(\mu_j)z_j=(W^HAW-\mu_j I)z_j=0,\ j=1,2,\ldots,m$,
let the desired $\lambda_*:=\lambda_i$ for a certain $i$ and the approximating
$\mu:=\mu_j$ satisfy \eqref{ritzvalue} for some $j$. Suppose that $\mu\not=\lambda_*$. Then for $\varepsilon$ sufficiently
small, we have the error estimate
\begin{equation}\label{ritzerror}
 |\mu-\lambda_*|=O(\varepsilon^{1/m_{\mu}}),
\end{equation}
where $m_{\mu}$ is the order of the largest Jordan block of $\mu$ in $B(\mu)$.
\end{theorem}

\begin{proof}
For ease of the proof, without loss of generality, for the desired $\lambda_*$ and
the chosen $\mu$, suppose that the Jordan normal
form of $B(\mu)=W^HAW-\mu I$ is $B(\mu)=ZJ(\mu)Z^{-1}$
and that $J(\lambda_*)$ has only one Jordan
block corresponding to such $\mu$, written as
\begin{eqnarray}
J_{\mu}(\lambda_*)&=&
\begin{pmatrix}
\mu-\lambda_* & 1 & &  &\\
 &\mu-\lambda_* &1 & & \\
 & &\ddots &\ddots &\\
 & & & \ddots& 1\\
 & & & &\mu-\lambda_*
\end{pmatrix}
\in \mathbb{C}^{m_{\mu}\times m_{\mu}} \label{jordan}\\
&=:&\diag(\mu-\lambda_*)+N \label{nip}
\end{eqnarray}
with $N$ the nilpotent matrix, and
\begin{equation}\label{jordanform}
B(\lambda_*)=ZJ(\lambda_*)Z^{-1}=Z
\diag(J_{\mu}(\lambda_*),\widetilde{J}(\lambda_*))Z^{-1}
\in \mathbb{C}^{m\times m}
\end{equation}
with $\widetilde{J}(\lambda_*)$ consisting of the Jordan blocks
corresponding to the eigenvalues of
$W^HAW$ that are not equal to $\mu$. Then the determinant
$\det(\widetilde{J}(\lambda_*))\not=0$ is independent of $\mu$.
For $\mu\not=\lambda_*$, $B(\lambda_*)$ is nonsingular, so is $J(\lambda_*)$.

Exploiting $|\det(Z)\det(Z^{-1})|=1$,
by the basic property that the absolute value of a matrix determinant
is equal to that of the product of singular values or eigenvalues of the
underlying matrix, we obtain
\begin{eqnarray}
|\det(B(\lambda_*))|&=& |\det(\widetilde{J}(\lambda_*))||\det(J_{\mu}(\lambda_*))|\nonumber\\
&=&|\det(\widetilde{J}(\lambda_*))||\mu-\lambda_*|^{m_{\mu}}\label{deteig}\\
&=&|\det(\widetilde{J}(\lambda_*))|\left(\prod_{i=2}^{m_{\mu}}
\sigma_i(J_{\mu}(\lambda_*))\right)\sigma_{\min}(J_{\mu}(\lambda_*)) \nonumber\\
&=:&|\det(\widetilde{J}(\lambda_*))|c(\lambda_*)\sigma_{\min}(J_{\mu}(\lambda_*)),
\label{det}
\end{eqnarray}
where
$$
\sigma_{\min}(J_{\mu}(\lambda_*))\leq\sigma_2(J_{\mu}(\lambda_*))
\leq\sigma_3(J_{\mu}(\lambda_*))\leq\cdots\leq \sigma_{m_{\mu}}(J_{\mu}(\lambda_*))
$$
are the $m_{\mu}$ singular values of $J_{\mu}(\lambda_*)$. Therefore, from
the non-singularity of $B(\lambda_*)$, \eqref{deteig} and \eqref{det}, we obtain
\begin{equation} \label{jordanerror}
\sigma_{\min}(J_{\mu}(\lambda_*))=\frac{|\mu-\lambda_*|^{m_{\mu}}}{c(\lambda_*)}.
\end{equation}

From \eqref{jordan} and \eqref{nip}, we
regard $J_{\mu}(\lambda_*)$ as a perturbed $N$ with the perturbation matrix
$\diag(\mu-\lambda_*)$.
Since $N^TN$ is the identity matrix with the first diagonal entry replaced by
zero, the singular values of $N$ consist of simple 0 and 1
with multiplicity $m_{\mu}-1$. Therefore,
by a result due to Weyl on singular values \cite[Corollary 4.31, p.70]{Stewart1998}, we have
$$
|\sigma_i(J_{\mu}(\lambda_*))-1|\leq |\mu-\lambda_*|,\ i=2,3,\ldots,m_{\mu},
$$
i.e.,
\begin{equation}\label{svalue2}
 1-|\mu-\lambda_*|\leq \sigma_i(J_{\mu}(\lambda_*))\leq 1+|\mu-\lambda_*|,\ i=2,3,\ldots,m_{\mu},
\end{equation}
Hence, once $\varepsilon$ is small so that $|\mu-\lambda_*|<1$, the above lower bound is positive.
In this case, we obtain
$$
(1-|\lambda_*-\mu|)^{m_{\mu}-1}\leq c(\lambda_*)\leq (1+|\lambda_*-\mu|)^{m_{\mu}-1},
$$
which mean that $c(\lambda_*)$ is uniformly bounded above and below by
$1+\delta$ and $1-\delta$ with $\delta>0$ arbitrarily small, respectively, as
$\varepsilon\rightarrow 0$. By \eqref{jordanerrorlu} and the above, we obtain \begin{equation}\label{jordanerrorlu}
\frac{|\mu-\lambda_*|^{m_{\mu}}}{(1+|\mu-\lambda_*|)^{m_{\mu}-1}}  \leq \sigma_{\min}(J_{\mu}(\lambda_*))\leq\frac{|\mu-\lambda_*|^{m_{\mu}}}{(1-|\mu-\lambda_*|)^{m_{\mu}-1}}.
\end{equation}

From \eqref{jordanform}, it is easily justified that
$$
\frac{1}{\|Z\|\|Z^{-1}\|}\sigma_{\min}(J_{\mu}(\lambda_*))
\leq\sigma_{\min}(B(\lambda_*))
\leq\|Z\|\|Z^{-1}\|\sigma_{\min}(J_{\mu}(\lambda_*)).
$$
Therefore, it follows from \eqref{jordanerror} that
\begin{equation}\label{BJmin}
\sigma_{\min}(B(\lambda_*))=O(\sigma_{\min}(J_{\mu}(\lambda_*))=O(|\mu-\lambda_*|^{m_{\mu}}).
\end{equation}
In terms of \eqref{BJmin} and \eqref{sigmB}, we obtain \eqref{ritzerror}.
\end{proof}

%\begin{remark}
%If $\mu$ remains
%%(semi-)simple as $\varepsilon\rightarrow 0$, i.e., $m_{\mu}=1$,
%its convergence to $\lambda_*$ is linear.
%The result is essentially in accordance with the perturbation bound
%on the Ritz value for the linear eigenvalue problem
%\cite[Corollary 2.2]{Jia1999} but our proof here is different
%from that in \cite{Jia1999}, where $W^HAW\in \mathbb{C}^{m\times m}$
%is supposed to only have
%one eigenvalue $\mu$ whose algebraic multiplicity is $m$ and geometric
%multiplicity is one.
%\end{remark}

\begin{remark}
Theorem 8 in Kahan, Parlett and Jiang \cite{kahan1982} presents a well-known perturbation
bound on the eigenvalues of a general non-normal matrix. A careful investigation
shows that their result and its proof can be adapted to the current context but do not
suffice to establish the error estimate \eqref{ritzerror}. In the current
context, Theorem 8 of \cite{kahan1982} states
\begin{equation}\label{kahanbound}
\frac{|\mu-\lambda_*|^{m_{\mu}}}{(1+|\mu-\lambda_*|)^{m_{\mu}-1}}
\leq \|ZE(\lambda_*)Z^{-1}\|
\end{equation}
with $E(\lambda_*)$ being the error matrix defined in Theorem~\ref{l1}. Imitating the proof
of the Bauer--Fike theorem \cite[p.357]{GoVa13}, they obtain bound~\eqref{kahanbound}
by proving that
\begin{equation}\label{estsvaluemin}
\frac{|\mu-\lambda_*|^{m_{\mu}}}{(1+|\mu-\lambda_*|)^{m_{\mu}-1}}\leq\sigma_{\min}(J_{\mu}(\lambda_*))
\leq \|ZE(\lambda_*)Z^{-1}\|,
\end{equation}
whose lower bound for $\sigma_{\min}(J_{\mu}(\lambda_*))$ is exactly the
one in \eqref{jordanerrorlu}. Here we point out that the bound $\sigma_{\min}(J_{\mu}(\lambda_*))
\leq \|ZE(\lambda_*)Z^{-1}\|$ cannot be improved, as is seen from the proof of Theorem 8 in
\cite{kahan1982}.
\end{remark}

\begin{remark}
Notice that $\sigma_{\min}(J_{\mu}(\lambda_*))\leq
\|ZE(\lambda_*)Z^{-1}\|=O(\varepsilon)$. Bound \eqref{estsvaluemin} indicates that
$\mu$ converges to $\lambda_*$ {\em at least} as fast as and may be {\em faster} than the error estimate
$O(\varepsilon^{1/m_{\mu}})$ in \eqref{ritzerror}. Rigorously speaking, bound
\eqref{kahanbound} alone may not lead to the error estimate
\eqref{ritzerror} unless it is tight in the sense that the ratio of
$\frac{|\mu-\lambda_*|^{m_{\mu}}}{(1+|\mu-\lambda_*|)^{m_{\mu}-1}}$
over $\sigma_{\min}(J_{\mu}(\lambda_*))$ is {\em uniformly} bounded by some {\em positive}
constant as $\mu\rightarrow\lambda_*$. If this ratio tends to {\em zero}
as $\mu\rightarrow\lambda_*$, then estimate \eqref{ritzerror} does {\em not} hold and
the convergence order of $\mu$ is {\em bigger} than $1/m_{\mu}$; that is,
$\mu$ will converge to $\lambda_*$ {\em faster} than
$O(\varepsilon^{1/m_{\mu}})$ , i.e., it will holds that $|\mu-\lambda_*|=o(\varepsilon^{1/m_{\mu}})$.
We notice that the proof approach itself of Theorem 8 in
\cite{kahan1982}  tells us little on the tightness of the lower bound for $\sigma_{\min}(J_{\mu}(\lambda_*))$.
A key step of their proof is an application of Gershgorin's disk theorem \cite[p.357]{GoVa13} to the Hermitian
semi-positive definite matrix $J_{\mu}(\lambda_*)J_{\mu}(\lambda_*)^H$,
and bounds its eigenvalues, i.e., the squares of singular values of $J_{\mu}(\lambda_*)$,
from above by $(1+|\mu-\lambda_*|)^2$. The
tightness of bounds $(1+|\mu-\lambda_*|)^2$ for the eigenvalue estimates
is not clear and needs a further analysis.
Based on the resulting upper bounds $1+|\mu-\lambda_*|$
for the singular values of $J_{\mu}(\lambda_*)$ and the value of
determinant of $J_{\mu}(\lambda_*)J_{\mu}(\lambda_*)^H$, they establish the lower bound
in \eqref{estsvaluemin} for $\sigma_{\min}(J_{\mu}(\lambda_*))$. The above analysis indicates that the lower
bound in \eqref{estsvaluemin}
for $\sigma_{\min}(J_{\mu}(\lambda_*))$ alone does not suffice to precisely characterize
the convergence order of $\mu$.
\end{remark}

\begin{remark}
A determinstic approach to precisely determining the convergence
order of $\mu$ is that if one can find an upper bound for
$\sigma_{\min}(J_{\mu}(\lambda_*))$ and proves that it is the same order small as a given
lower bound then the precise convergence order of $\mu$
is exactly the convergence order of the lower and upper bounds.
Using a proof approach different from that used in
\cite{kahan1982}, we establish the lower and upper bounds \eqref{jordanerrorlu} for $\sigma_{\min}(J_{\mu}(\lambda_*))$. In our proof, by \eqref{jordan} and \eqref{nip}, we
regard $J_{\mu}(\lambda_*)$ as a perturbed
matrix of the nilpotent matrix $N$ with the perturbation ${\rm diag}(\mu-\lambda_*)$,
and then exploit Weyl's theorem to have established both the lower and upper bounds in \eqref{svalue2}
for all the singular values but the smallest $\sigma_{\min}(J_{\mu}(\lambda_*))$
of $J_{\mu}(\lambda_*)$.
Relation \eqref{jordanerrorlu} shows that
the lower and upper bounds for $\sigma_{\min}(J_{\mu}(\lambda_*))$ are (asymptotically) the tightest since they
both tend to $|\mu-\lambda_*|^{m_{\mu}}$ as $\varepsilon\rightarrow 0$, i.e.,
$$
\frac{|\mu-\lambda_*|^{m_{\mu}}}{\sigma_{\min}(J_{\mu}(\lambda_*))}\rightarrow 1,
$$
indicating that
$$
|\mu-\lambda_*|\rightarrow (\sigma_{\min}(J_{\mu}(\lambda_*)))^{1/m_{\mu}}.
$$
The remaining proof of Theorem~\ref{linearconv} is
based on the obtained
lower and upper bounds for $\sigma_{\min}(J_{\mu}(\lambda_*))$ and ultimately achieves the
error estimate \eqref{ritzerror}. In summary,
as far as estimating $\sigma_{\min}(J_{\mu}(\lambda_*))$ is concerned,
it clear that \eqref{estsvaluemin} is a {\em part} of \eqref{jordanerrorlu} and, in the meantime,
Theorem~\ref{linearconv} is not a corollary of Theorem 8 in \cite{kahan1982}.
In addition, there is a distinction between Theorem 8 in \cite{kahan1982} and Theorem~\ref{linearconv}:
the former has no limitation on the size of $\|E(\lambda_*)\|$ in
\eqref{kahanbound}, but the latter requires that $\|E(\lambda_*)\|$, i.e.,
$\varepsilon$, be sufficiently small so that the lower bound in \eqref{svalue2} is positive, i.e.,
$|\mu-\lambda_*|<1$.
\end{remark}

Now let us deviate from the topic of NEP for a moment and
consider the eigenvalue perturbation problem of $\tilde A=A+E$ with
$A$ being a general nonnormal matrix and $E$ being a perturbation matrix.
Adapted the proof of Theorem \ref{linearconv} to it, it
is straightforward to establish
the following error estimate, which cannot be obtained from Theorem 8 of
\cite{kahan1982} as has been argued above.

\begin{corollary}
Let $\tilde{A}=A+E$ with $E$ a perturbation matrix and $J=ZAZ^{-1}$
be the Jordan normal form of $A$. For $\|E\|$ sufficiently small, to any
eigenvalue $\tilde{\lambda}$ of $\tilde{A}$ there corresponds an eigenvalue $\lambda$ of
$A$ such that if $\tilde{\lambda}\not=\lambda$ then
\begin{equation}
|\lambda-\tilde{\lambda}|=O(\|E\|^{\frac{1}{m_{\lambda}}}),
\end{equation}
where $m_{\lambda}$ is the order of the largest Jordan block to which $\lambda$ belong.
\end{corollary}

We can extend Theorem~\ref{linearconv} to the Rayleigh--Ritz method for NEP \eqref{NEP}.
To this end, we need the following famous Keldysh theorem \cite{Guttel}.

\begin{lemma}\label{Keldysh}
Suppose that the analytic matrix-valued function $B(\lambda)\in
\mathbb{C}^{m\times m}$ with $\lambda\in\Omega\subseteq \mathbb{C}$ has
finitely many eigenvalues $\mu_i,\ i=1,\ldots,s$ in $\Omega$ with
partial multiplicities $m_{i,1}\geq m_{i,2}\geq \cdots\geq m_{i,d_i}$,
and define
\begin{equation}\label{hatm}
 \widehat{m}=\sum_{i=1}^s\sum_{j=1}^{d_i}m_{i,j}.
\end{equation}
Then there are $m\times\widehat{m}$ matrices $Z$ and $Q$ whose columns
are generalized eigenvectors of $B(\lambda)$ and an
$\widehat{m}\times\widehat{m}$ Jordan
matrix $J$ whose eigenvalues are $\mu_i,\ i=1,\ldots,s$ with multiplicities $m_{i,1},\ldots,m_{i,d_i}$
such that
\begin{equation}\label{jordanB}
B(\lambda)^{-1}=Z(\lambda I-J)^{-1}Q^H+\widetilde{R}(\lambda)
\end{equation}
for some $m\times m$ analytic matrix-valued function $\widetilde{R}(\lambda)$
in $\Omega$ with
\begin{eqnarray}
J=\diag(J_1,J_2,\ldots,J_s), \ \ & & J_i=\diag(J_{i,1},\ldots,J_{i,d_i}),\label{defJ} \\
Z=(Z_1,\ldots,Z_s),\ \  && Z_i=(Z_{i,1},\ldots,Z_{i,d_i}),\label{defZ}\\
Q=(Q_1,\ldots,Q_s),\ \ && Q_i= (Q_{i,1},\ldots,Q_{i,d_i}), \label{defQ}
\end{eqnarray}
where $J_{i,j}\in \mathbb{C}^{m_{ij}\times m_{ij}}$ and $Z_i,Q_i$ are conformally partitioned as $J_i$,
$\ i=1,2,\ldots,s, j=1,2,\ldots,d_i$.
\end{lemma}

\begin{remark}
This theorem shows that (i) up to an analytic additive term $\widetilde{R}(\lambda)$,
the behavior of the resolvent $B(\lambda)^{-1}$ at a small neighborhood of each of the
eigenvalues $\mu_i$ is captured by the inverse of the shifted Jordan matrix $(\lambda I-J)^{-1}$,
(ii) an eigenvalue $\mu_i$ is called semi-simple if $m_{i,1}=\cdots=m_{i,d_i}=1$,
and it is simple if $d_i=1$, and (iii) the algebraic and
geometric multiplicities of $\mu_i$ are $\sum_{j=1}^{d_i}m_{i,j}$ and
$d_i$, respectively; (iv) $\widetilde{R}(\lambda)=0$ in the linear
case $B(\lambda)=W^HAW-\lambda I$.
\end{remark}

%In the following considerations, we suppose
%$d_1=1$, which means that the algebraic and geometric multiplicities
%of $\mu$ are $m_{\mu}=m_{1,1}$ and one, respectively.

\begin{theorem} \label{ritznep}
Suppose that $B(\lambda)$ has finitely many eigenvalues
$\mu_i,\ i=1,\ldots,s$ in $\Omega\subseteq \mathbb{C}$ with
partial multiplicities $m_{i,1}\geq m_{i,2}\geq \cdots\geq m_{i,d_i}$
and the inverse of its resolvent is of form \eqref{jordanB}, and assume that
$\widehat{m}\leq m$ with $\widehat{m}$ defined by
\eqref{hatm} and $Z$, $Q$ are of full column rank.
For $\mu$ defined in \eqref{ritzvalue}, write $\mu:=\mu_1$. Suppose $\mu\not=\lambda_*$.
Then for $\mu$ lying in the closed disc $\mathcal{D}=\{\mu\in\Omega: \
|\mu-\lambda_{*}|\leq r\}\subset \Omega$, the following error estimate holds:
\begin{equation}\label{ritzbound}
 |\mu-\lambda_*|=O(\varepsilon^{1/m_{\mu}}),
\end{equation}
where $m_{\mu}:=m_{1,1}$ is the order of the largest Jordan block $J_{1,1}$ in $J_1$
defined by \eqref{defJ}.
\end{theorem}

\begin{proof}
Since $Z$ and $Q$ defined by \eqref{defZ} and \eqref{defQ} are of full column rank,
$(\lambda_* I-J)^{-1}Q^H$ is of full row rank. Therefore,
\begin{equation}\label{moore}
(Z(\lambda_*I-J)^{-1}Q^H)^{-1}=(Q^H)^{\dagger}(\lambda_*I-J)Z^{\dagger},
\end{equation}
where $\dagger$ denotes the Moore--Penrose generalized
inverse of a matrix. Denote by $\sigma_{\max}(\cdot)$ the
largest singular value of a matrix. Since $\widetilde{R}(\lambda)$
is analytic in $\lambda\in \Omega$ and $\lambda_*\in\mathcal{D}\subset
\Omega$,
$\|\widetilde{R}(\lambda)\|$ is uniformly bounded with respect to $\lambda\in\mathcal{D}\subset \Omega$.
Notice that
$$
 \sigma_{\max}(B(\lambda_*)^{-1})\mbox{\ \ and\ \ }\sigma_{\max}(Z(\lambda_*
I-J)^{-1}Q^H)
$$
are infinitely large as $\mu\rightarrow \lambda_*$, since, by \eqref{sigmB} and \eqref{moore},
their reciprocals
$$
\sigma_{\min}(B(\lambda_*))\mbox{\ \ and\ \ }
\sigma_{\min}((Q^H)^{\dagger}(\lambda_*I-J)Z^{\dagger}).
$$
tend to zero as $\mu\rightarrow \lambda_*$, whose convergence has been known to be unconditional
as $\varepsilon\rightarrow 0$ (cf. the paragraph after Theorem~\ref{C1}).

From \eqref{jordanB}, we have
{\small
$$
\sigma_{\max}(Z(\lambda_*
I-J)^{-1}Q^H)-\|\widetilde{R}(\lambda_*)\|\leq \sigma_{\max}(B(\lambda_*)^{-1})\leq\sigma_{\max}(Z(\lambda_*
I-J)^{-1}Q^H)+\|\widetilde{R}(\lambda_*)\|.
$$}
Therefore, as $\varepsilon\rightarrow 0$, we have
\begin{equation}\label{sim}
\sigma_{\max}(B(\lambda_*)^{-1})\sim \sigma_{\max}(Z(\lambda_*
I-J)^{-1}Q^H),
\end{equation}
where $\sim$ indicates that the ratio of the left and right hand sides
is asymptotically one.

In $\lambda I-J$, define $J_{\mu}(\lambda)=\lambda I-J_1$, which corresponds to
the Ritz value $\mu=\mu_1$, and
$J_i(\lambda)=\lambda I-J_i,\ i=2,3,\ldots,s$. Then, as
$\varepsilon\rightarrow 0$, from \eqref{moore} and \eqref{sim} we obtain
\begin{eqnarray*}
 \sigma_{\min}(B(\lambda_*))&\sim &
\sigma_{\min}((Q^H)^{\dagger}(\lambda_*I-J)Z^{\dagger})=O(\sigma_{\min}(J_{\mu}(\lambda_*))).
\end{eqnarray*}
Since $m_{1,1}\geq\cdots\geq m_{1,d_1}$, exploiting \eqref{jordanerror}
and $m_{\mu}=m_{1,1}$, we obtain
$$
\sigma_{\min}(J_{\mu}(\lambda_*))=\sigma_{\min}(\lambda_* I-J_{1,1})=
\frac{|\lambda_*-\mu|^{m_{\mu}}}{c(\lambda_*)},
$$
which is just \eqref{jordanerror}.
The remaining proof is the same as that of Theorem~\ref{linearconv} after \eqref{jordanerror}.
\end{proof}

\begin{remark}
It is worth noticing that $m_{\mu}$
depends on $\varepsilon$ and may change as $\varepsilon$ changes.
If $m_{\mu}=1$, as is often the case when $\lambda_*$ is a simple eigenvalue of
$T(\cdot)$, then the convergence of $\mu$
is linear and is as fast as $\varepsilon\rightarrow 0$. Otherwise, the convergence can be slow.
\end{remark}

%In the subsequent sections, we always assume that $B(\lambda)$ has finitely many eigenvalues
%in $\Omega\subseteq\mathbb{C}$, so that \Cref{ritznep} holds.

\section{Convergence of the Ritz vector}\label{sec:R}

In this section, we will show that the Ritz vector $\widetilde x$ converges conditionally,
meaning that the Rayleigh--Ritz method may
fail to compute $x_*$.
% since the associated Ritz vector
%%$\widetilde{x}$ may not converge to $\lambda_*$ and it even may not be unique
%as $\varepsilon\rightarrow 0$ and even when $\varepsilon=0$.
We will show that $\widetilde x$ even may not be unique,
which corresponds to the case that the geometric multiplicity
of $\mu$ is bigger than one, even if the desired $\lambda_*$ is
simple. In this case, there are more than one Ritz vectors $\widetilde x$,
each of which is generally a meaningless approximation to
$x_*$ even if $\varepsilon=0$, causing that the method fails
even if we have a perfect projection subspace $\mathcal{W}$ that contains
$x_*$ exactly. Furthermore, even if $\widetilde x$ is unique, we will prove that there is no guarantee
that it converges to $x_*$ as $\varepsilon\rightarrow 0$.
%On the contrary, we will prove that the refined Rayleigh--Ritz method works
%perfectly, and the refined Ritz vector $\widehat{x}$ converges to $x_*$
%unconditionally as $\varepsilon\rightarrow 0$.
Later on, we will construct examples to illustrate these assertions.

Let $X_{\bot}$ be an orthonormal basis of the orthogonal complement of
${\rm span}\{x_{*}\}$ in $\mathbb{C}^n$. Then $(x_*\ X_{\perp})$ is unitary.
From $T(\lambda_{*})x_{*}=0$, we obtain the Schur-like decomposition of $T(\lambda_*)$:
\begin{equation}\label{schurlike}
	\begin{pmatrix}
		x_{*}^{H} \\
		X_{\bot}^{H} \\
	\end{pmatrix}T(\lambda_{*})
(x_{*} \ X_{\bot})=\begin{pmatrix}
		0 & x_{*}^{H}T(\lambda_{*})X_{\bot}\\
		0 &  L(\lambda_{*}) \\
	\end{pmatrix},
\end{equation}
where $L(\lambda_{*})=X_{\bot}^{H}T(\lambda_{*})X_{\bot}$.
For a simple $\lambda_{*}$, Proposition 1 in \cite{Neumaier} states
that $\text{rank}(T(\lambda_{*}))=n-1$. As a result,
$L(\lambda_{*})$ is nonsingular, and
$\sigma_{\min}(L(\lambda_{*}))>0$.

Notice that, for a given (not necessarily Ritz)
approximation $(\mu,\widetilde{x})$ to $(\lambda_*,x_*)$, its residual norm
$\|T(\mu)\widetilde{x}\|$ is a posteriori computable.
In terms of $\|T(\mu)\widetilde{x}\|$, the following theorem establishes
an error bound for $\sin\angle(\widetilde{x},x_*)$, which extends Theorem 3.1
of \cite{Jia1999} and Theorem 6.1 of \cite{Jia2001} to the NEP case
and applies to the Rayleigh--Ritz method
and the refined Rayleigh--Ritz method described in section 1.

\begin{theorem}\label{con-eigenpair}
Let $(\mu,\widetilde{x})$ with $\|\widetilde{x}\|=1$ be an arbitrarily given approximation
to the desired eigenpair $(\lambda_*,x_*)$ of $T(\lambda)x=0$, denote the
corresponding residual norm by
$$\rho=\| T(\mu)\widetilde{x}\|,
$$
and write $L(\mu)=X_{\bot}^{H}T(\mu)X_{\bot}$. If $\mu\in\mathcal{D}$ with the closed
disc $\mathcal{D}$ defined in Theorem~{\rm\ref{ritznep}} and
$\sigma_{\min}(L(\mu))>0$, then
\begin{equation}\label{sin-x}	
\sin\angle(\widetilde{x},x_*)\leq\frac{\rho+
\|T^{\prime}(\lambda_{*})\||\mu-\lambda_{*}|}{\sigma_{\min}(L(\mu))}+O(|\mu-\lambda_*|^2).
\end{equation}
\end{theorem}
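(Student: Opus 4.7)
The plan is to decompose $\widetilde{x}$ in the orthonormal basis $(x_*,X_{\bot})$ of $\mathbb{C}^n$, exploit the block structure of $T(\lambda_*)$ from the Schur-like decomposition \eqref{schurlike}, and combine this with a first-order Taylor expansion of $T(\mu)$ at $\lambda_*$ to isolate a system in the ``off-$x_*$'' component, from which $\sigma_{\min}(L(\mu))$ supplies the desired lower bound.

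Concretely, I would first write $\widetilde{x}=\alpha x_*+X_{\bot}\beta$ with $|\alpha|^2+\|\beta\|^2=1$, so that
\begin{equation*}
\sin\angle(x_*,\widetilde{x})=\|X_{\bot}^H\widetilde{x}\|=\|\beta\|.
\end{equation*}
Next, I would apply $X_{\bot}^H$ to $T(\mu)\widetilde{x}$ and use $L(\mu)=X_{\bot}^HT(\mu)X_{\bot}$ to obtain
\begin{equation*}
X_{\bot}^HT(\mu)\widetilde{x}=\alpha\,X_{\bot}^HT(\mu)x_*+L(\mu)\beta.
\end{equation*}
Since $T(\lambda_*)x_*=0$, the Taylor expansion of the analytic $T(\cdot)$ at $\lambda_*$ gives
\begin{equation*}
T(\mu)x_*=(\mu-\lambda_*)T'(\lambda_*)x_*+O(|\mu-\lambda_*|^2),
\end{equation*}
so that
\begin{equation*}
L(\mu)\beta=X_{\bot}^HT(\mu)\widetilde{x}-\alpha(\mu-\lambda_*)X_{\bot}^HT'(\lambda_*)x_*+O(|\mu-\lambda_*|^2).
\end{equation*}

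Taking norms, using $\|X_{\bot}^HT(\mu)\widetilde{x}\|\leq\|T(\mu)\widetilde{x}\|=\rho$, $|\alpha|\leq 1$, and $\|X_{\bot}^HT'(\lambda_*)x_*\|\leq\|T'(\lambda_*)\|$, and then invoking $\sigma_{\min}(L(\mu))\|\beta\|\leq\|L(\mu)\beta\|$ (which is meaningful thanks to the hypothesis $\sigma_{\min}(L(\mu))>0$ for the converging $\mu$, available because $\sigma_{\min}(L(\lambda_*))>0$ by the simplicity of $\lambda_*$ together with continuity), I would divide by $\sigma_{\min}(L(\mu))$ and absorb the $O(|\mu-\lambda_*|^2)$ term as indicated in the statement.

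The main obstacle is conceptual rather than computational: one must recognize that the block zero in the first column of \eqref{schurlike} is exactly what kills the leading-order contribution of $\alpha x_*$ to $X_{\bot}^HT(\lambda_*)\widetilde{x}$, leaving only the first-order $(\mu-\lambda_*)$ correction through $T'(\lambda_*)x_*$; without this cancellation the bound would contain an $O(1)$ term and would be useless. A minor technical point is to justify that the higher-order Taylor remainder can indeed be dropped uniformly as $\mu\to\lambda_*$, which is routine given analyticity of $T(\cdot)$ on $\Omega$ and that $\mu$ stays in a fixed compact neighborhood of $\lambda_*$.
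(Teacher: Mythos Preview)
Your proposal is correct and follows essentially the same approach as the paper: decompose $\widetilde{x}$ via the unitary $(x_*,X_\bot)$, project onto the $X_\bot$ block to isolate $L(\mu)X_\bot^H\widetilde{x}$, Taylor-expand $T(\mu)x_*$ about $\lambda_*$, and bound below by $\sigma_{\min}(L(\mu))$. The only cosmetic difference is that the paper first writes $\rho$ using the full unitary transformation and then extracts the second block, whereas you apply $X_\bot^H$ directly; the algebra and estimates are identical.
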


\begin{proof}
Exploiting $x_{*}x_{*}^{H}+X_{\bot}X_{\bot}^{H}=I$,
by the 2-norm unitary invariance, we obtain
\begin{eqnarray*}
	% \nonumber to remove numbering (before each equation)
	\rho &=& \bigg\|\begin{pmatrix}
		x_{*}^{H} \\
		X_{\bot}^{H} \\
	\end{pmatrix}T(\mu)\widetilde{x}\bigg\| \\
	&=&  \bigg\|\begin{pmatrix}
		x_{*}^{H}T(\mu)\widetilde{x} \\
		X_{\bot}^{H}T(\mu)(x_{*}x_{*}^{H}+X_{\bot}X_{\bot}^{H})\widetilde{x}\\
	\end{pmatrix}\bigg\|
	\\
	&=&  \bigg\|\begin{pmatrix}
		x_{*}^{H}T(\mu)\widetilde{x} \\
		X_{\bot}^{H}T(\mu)x_{*}x_{*}^{H}\widetilde{x}+L(\mu)X_{\bot}^{H}\widetilde{x} \\
	\end{pmatrix} \bigg\|.
\end{eqnarray*}
Therefore,
\begin{eqnarray*}
	\rho&\geq& \|X_{\bot}^{H}T(\mu)x_{*}x_{*}^{H}\widetilde{x}+L(\mu)X_{\bot}^{H}\widetilde{x}\|\\
	&\geq& \|L(\mu)X_{\bot}^{H}\widetilde{x}\|-\|X_{\bot}^{H}T(\mu)x_{*}x_{*}^{H}\widetilde{x}\|,
\end{eqnarray*}
i.e.,
\begin{equation}\label{p}
	\|L(\mu)X_{\bot}^{H}\widetilde{x}\|\leq\rho+
	\|X_{\bot}^{H}T(\mu)x_{*}x_{*}^{H}\widetilde{x}\|.
\end{equation}
Since $T(\mu)$ is analytic in $\mu\in\mathcal{D}\subset\Omega$, we have
\begin{equation}\label{expension}
	T(\mu)=T(\lambda_{*})+T^{\prime}(\lambda_{*})(\mu-\lambda_{*})+O((\mu-\lambda_{*})^2),
\end{equation}
which, by making use of $T(\lambda_{*})x_*=0$, proves
$$
T(\mu)x_{*}=(\mu-\lambda_{*})T^{\prime}(\lambda_{*})x_{*}+O((\mu-\lambda_{*})^2).
$$
As a result,
\begin{equation}\label{inest}
	\|X_{\bot}^{H}T(\mu)x_{*}x_{*}^{H}\widetilde{x}\|\leq \|T^{\prime}(\lambda_{*})\||\mu-\lambda_{*}|
+O(|\mu-\lambda_*|^2).
\end{equation}

Notice that
$$
\sin\angle(x_{*},\widetilde{x})=\|X_{\bot}^H\widetilde{x}\|
\mbox{\ \ and\ \ }\|L(\mu)X_{\bot}^{H}\widetilde{x}\|\geq \sigma_{\min}(L(\mu))\|X_{\bot}^{H}\widetilde{x}\|.
$$
Therefore, from (\ref{p}) and (\ref{inest}), we obtain
$$\sin\angle(x_{*},\widetilde{x})\sigma_{\min}(L(\mu))
\leq\rho+\|T^{\prime}(\lambda_{*})\||\mu-\lambda_{*}|+O(|\mu-\lambda_*|^2),
$$
proving that
\begin{equation*}
	\sin\angle(x_{*},\widetilde{x})\leq\dfrac{\rho+\|T^{\prime}(\lambda_{*})\|
|\mu-\lambda_{*}|} {\sigma_{\min}(L(\mu))}+O(|\mu-\lambda_*|^2).
\end{equation*}
~
\end{proof}

\begin{remark}\label{remres}
The second term in the right-hand side of \eqref{p} vanishes
for the linear eigenvalue problem $T(\lambda)x=(A-\lambda I)x=0$ since
$X_{\bot}^HT(\mu)x_*=(\lambda_*-\mu)X_{\bot}^Hx_*=0$. In this case,
the right-hand side of \eqref{sin-x} reduces to $\rho/\sigma_{\min}(L(\mu))$, and
Theorem~\ref{con-eigenpair} reduces to Theorem 3.1 of \cite{Jia1999}
for the linear eigenvalue problem.
\end{remark}

For the simple $\lambda_*$, notice that $\sigma_{\min}(L(\mu))\rightarrow
\sigma_{\min}(L(\lambda_*))>0$ as $\mu\rightarrow\lambda_*$. Therefore,
the condition $\sigma_{\min}(L(\mu))>0$ must be fulfilled, provided
that $|\mu-\lambda_*|$ is sufficiently small. We will investigate more on
$\sigma_{\min}(L(\mu))$. Theorem~\ref{con-eigenpair} indicates that if
$\mu\rightarrow\lambda_*$ and the residual norm $\rho$ converges to zero
then the corresponding approximate eigenvector $\widetilde{x}\rightarrow x_*$.
Therefore, $\rho$ can be used to check if the approximate eigenpair $(\mu,\widetilde{x})$ converges.

Now let us return to the convergence problem of the Ritz vector $\widetilde{x}$
as $\varepsilon\rightarrow 0$. We will prove that it
converges to $x_*$ {\em conditionally} and reveal why it may fail to converge. Particularly,
we will illustrate that the method may not find $x_*$ and $\widetilde{x}$ may
not be unique even when $\varepsilon=0$, i.e., $x_*\in \mathcal{W}$.

Recall from Algorithm~\ref{alg-RR} that $B(\mu)z=0$. Let $(z \ \ Z_{\bot})$ be unitary. Then
we obtain a Schur-like decomposition of $B(\mu)$:
\begin{equation}\label{schurB}
	\begin{pmatrix}
		z^{H} \\
		Z_{\bot}^{H} \\
	\end{pmatrix}B(\mu)
		(z\ \ Z_{\bot}) \\
	=\begin{pmatrix}
		0 & z^{H}B(\mu)Z_{\bot}\\
		0 &  C(\mu) \\
	\end{pmatrix},
\end{equation}
where $C(\mu)=Z_{\bot}^HB(\mu)Z_{\bot}$.
If $\mu$ is a simple eigenvalue of $B(\cdot)$, then $C(\mu)$ is nonsingular.
However, if $\mu$ is multiple or, though simple,
is close to some other eigenvalues
of $B(\cdot)$, then $C(\mu)$ is singular or is close to singularity. In
the first case, since $\sigma_{\min}(C(\mu))$ is zero, the
eigenvector $z$ and thus the Ritz vector $\widetilde{x}$
are {\em not} unique, leading to the {\em failure} of the method.
In the second case, $\sigma_{\min}(C(\mu))$ is close to zero; we will
prove that $\widetilde{x}$, though unique, may be
a very inaccurate approximation to $x_*$ and even may have no accuracy.

Suppose that $\mu$ is simple. Then the Ritz vector $\widetilde{x}$ is unique.
The following result gives an a priori error bound for the error of
$\widetilde{x}$ in terms of $\varepsilon$, and shows how the size of
$\sigma_{\min}(C(\lambda_*))$ affects the accuracy of $\widetilde{x}$.
Keep in mind the basic fact that if $\mu$ is close to some
other eigenvalues of $B(\cdot)$ then $\sigma_{\min}(C(\mu))$ is close to zero and; in this case,
by the continuity argument, $\sigma_{\min}(C(\lambda_*))$ is close to
$\sigma_{\min}(C(\mu))$ and is thus close to zero.

\begin{theorem}\label{con-Ritz}
Assume that $\mu$ is simple and $\sigma_{\min}(C(\lambda_*))>0$.
Then $\widetilde{x}$ is unique and 	
\begin{equation}\label{sin-x1}
		\sin\angle(\widetilde{x},x_*)\leq\bigg(1+\frac{\|
T(\lambda_{*})\|}{\sqrt{1-\varepsilon^{2}}\sigma_{\min}(C(\lambda_*))}\bigg)
		\varepsilon+
\frac{\|T^{\prime}(\lambda_{*})\||\mu-\lambda_{*}|}{\sigma_{\min}(C(\lambda_*))}
+O(|\mu-\lambda_*|^2).	
\end{equation}
\end{theorem}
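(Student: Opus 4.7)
The plan is to split the angle between $x_*$ and $\widetilde{x}=Wz$ into a contribution measuring how well $\mathcal{W}$ approximates $x_*$ and a contribution living entirely inside the coordinate space $\mathbb{C}^m$. Put $u=W^Hx_*$, $u_\bot=W_\bot^Hx_*$, and $\widehat{u}=u/\sqrt{1-\varepsilon^{2}}$, as in the proof of Theorem~\ref{l1}, so that $\|\widehat{u}\|=1$ and $W\widehat{u}\in\mathcal{W}$ is the normalized orthogonal projection of $x_*$ onto $\mathcal{W}$. A direct expansion of $(I-Wzz^HW^H)x_{*}$ along $\mathcal{W}$ and $\mathcal{W}^{\bot}$, together with $|z^Hu|^{2}=(1-\varepsilon^{2})\cos^{2}\angle(\widehat{u},z)$, yields
\[
\sin^{2}\angle(x_{*},\widetilde{x})=\varepsilon^{2}+(1-\varepsilon^{2})\sin^{2}\angle(\widehat{u},z),
\]
hence $\sin\angle(x_{*},\widetilde{x})\leq\varepsilon+\sin\angle(\widehat{u},z)$. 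The task therefore reduces to bounding $\sin\angle(\widehat{u},z)$.

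I next work in $\mathbb{C}^{m}$ using the Schur-like decomposition \eqref{schurB}. Decompose $\widehat{u}=\alpha z+Z_{\bot}\beta$ with $\alpha=z^H\widehat{u}$, so that $\|\beta\|=\sin\angle(\widehat{u},z)$. Applying $B(\mu)$ and using $B(\mu)z=0$ together with $B(\mu)Z_{\bot}=z\,(z^HB(\mu)Z_{\bot})+Z_{\bot}C(\mu)$ extracted from \eqref{schurB} give
\[
\|B(\mu)\widehat{u}\|^{2}=|z^HB(\mu)Z_{\bot}\beta|^{2}+\|C(\mu)\beta\|^{2}\geq\sigma_{\min}(C(\mu))^{2}\|\beta\|^{2}.
\]
Simplicity of $\mu$ forces $C(\mu)$ to be nonsingular, which both legalizes the a priori estimate $\sin\angle(\widehat{u},z)\leq\|B(\mu)\widehat{u}\|/\sigma_{\min}(C(\mu))$ and shows that the null space of $B(\mu)$ is one-dimensional, so $z$, and hence $\widetilde{x}=Wz$, is unique up to a unimodular scalar.

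Finally, I control $\|B(\mu)\widehat{u}\|$ by a Taylor expansion about $\lambda_{*}$, mimicking \eqref{expension}:
\[
B(\mu)\widehat{u}=B(\lambda_{*})\widehat{u}+(\mu-\lambda_{*})\,W^{H}T^{\prime}(\lambda_{*})W\widehat{u}+O((\mu-\lambda_{*})^{2}).
\]
From the proof of Theorem~\ref{l1}, $B(\lambda_{*})\widehat{u}=r$ with $\|r\|\leq\varepsilon\|T(\lambda_{*})\|/\sqrt{1-\varepsilon^{2}}$, while trivially $\|W^HT^{\prime}(\lambda_{*})W\widehat{u}\|\leq\|T^{\prime}(\lambda_{*})\|$. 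Inserting these two estimates into the bound from the previous paragraph and adding $\varepsilon$ recovers \eqref{sin-x1} with $\sigma_{\min}(C(\mu))$ in the denominator. The remaining and only delicate point is to replace $\sigma_{\min}(C(\mu))$ by $\sigma_{\min}(C(\lambda_{*}))$: since $C(\lambda)=Z_{\bot}^{H}B(\lambda)Z_{\bot}$ is analytic in $\lambda$, one has $\sigma_{\min}(C(\mu))=\sigma_{\min}(C(\lambda_{*}))+O(|\mu-\lambda_{*}|)$, and the substitution perturbs the bound only by a higher-order $O(|\mu-\lambda_{*}|^{2})$ term already discarded in the statement. The hypothesis $\sigma_{\min}(C(\lambda_{*}))>0$ is what keeps the denominator uniformly bounded away from zero as $\varepsilon\to 0$; everything else is a routine assembly of the ingredients above.
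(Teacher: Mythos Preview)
Your proof is correct and follows essentially the same route as the paper: split the angle via the normalized projection $\widehat{u}$, bound $\sin\angle(\widehat{u},z)$ by a residual-over-$\sigma_{\min}(C)$ estimate, and feed in the bound $\|B(\lambda_*)\widehat{u}\|\leq\varepsilon\|T(\lambda_*)\|/\sqrt{1-\varepsilon^2}$ from Theorem~\ref{l1}. The only organizational difference is that the paper obtains $\sigma_{\min}(C(\lambda_*))$ directly by invoking Theorem~\ref{con-eigenpair} on the projected problem $B(\cdot)$ with the roles of exact and approximate eigenpair reversed (so the ``$L(\mu)$'' there becomes $C(\lambda_*)$), whereas you re-derive that estimate from the Schur decomposition at $\mu$, get $\sigma_{\min}(C(\mu))$, and then swap to $\sigma_{\min}(C(\lambda_*))$ at the cost of an extra higher-order term.
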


\begin{proof}
Note that the residual
$r=B(\lambda_{*})\widehat{u}$ in (\ref{err}) satisfies
$$\| r\|\leq\frac{\varepsilon}{\sqrt{1-\varepsilon^{2}}}\| T(\lambda_{*})\|.$$
Then applying Theorem \ref{con-eigenpair} to $B(\mu)z=0$ and the residual
norm $\|r\|=\|B(\lambda_{*})\widehat{u}\|$ with $(\lambda_*,\widehat{u})$ being an
approximate eigenpair of $B(\cdot)$, we obtain
\begin{equation}\label{sin-int}
\sin\angle(z,\widehat{u})\leq\frac{\| T(\lambda_{*})\|\varepsilon
+\sqrt{1-\varepsilon^{2}}\|T^{\prime}(\lambda_{*})\||\mu-\lambda_{*}|}
{\sqrt{1-\varepsilon^{2}}\sigma_{\min}(C(\lambda_*))}+O(|\mu-\lambda_*|^2).
\end{equation}
Since $\widetilde{x}=Wz$, $\widehat{u}=W^Hx_*/\sqrt{1-\varepsilon^2}$
and $P_{\mathcal{W}}=WW^H$, \eqref{sin-int} means that
$$
\sin\angle(\widetilde{x},P_{\mathcal{W}}x_{*})\leq
\frac{\| T(\lambda_{*})\|\varepsilon
+\sqrt{1-\varepsilon^{2}}\|T^{\prime}(\lambda_{*})\||\mu-\lambda_{*}|}
{\sqrt{1-\varepsilon^{2}}\sigma_{\min}(C(\lambda_*))}+O(|\mu-\lambda_*|^2)
$$
due to the orthonormality of $W$. Exploiting the triangle inequality
$$
\angle(\widetilde{x},x_{*})\leq\angle(x_{*},P_{\mathcal{W}}x_{*})
+\angle(P_{\mathcal{W}}x_{*},\widetilde{x})
=\angle(x_{*},\mathcal{W})+\angle(P_{\mathcal{W}}x_{*},\widetilde{x}),
$$
by \eqref{devia} we obtain
\begin{eqnarray*}
	\sin\angle(\widetilde{x},x_{*})&\leq&
	\sin\angle(x_{*},\mathcal{W})+\sin\angle(P_{\mathcal{W}}x_{*},\widetilde{x})\\
	&\leq& \bigg(1+\dfrac{\| T(\lambda_{*})\|}{\sqrt{1-\varepsilon^{2}}\sigma_{\min}(C(\lambda_*))}\bigg)
	\varepsilon+\frac{\|T^{\prime}(\lambda_{*})\||\mu-\lambda_{*}|}
{\sigma_{\min}(C(\lambda_*))}+O(|\mu-\lambda_*|^2).
\end{eqnarray*}
~
\end{proof}

Since \eqref{sin-int} is an application of Theorem~\ref{con-eigenpair}
to $r=B(\lambda_*)\widehat{u}$, Remark~\ref{remres} works for \eqref{sin-int},
the second and third terms in the right-hand side
of \eqref{sin-x1} vanish when NEP~\eqref{NEP} becomes a linear one,
and Theorem~\ref{con-Ritz} reduces to Theorem 3.2 of \cite{Jia1999}.
Theorem~\ref{con-Ritz} provides sufficient conditions for the convergence of
the Ritz vector $\widetilde{x}$, which require that
$\sigma_{\min}(C(\lambda_*))$
be {\em uniformly bounded away from zero} as $\varepsilon\rightarrow 0$. Unfortunately,
as we have commented before the theorem, theoretically there is no guarantee
that this condition is satisfied. Therefore, $\widetilde{x}$ converges to $x_*$ conditionally.
If $\sigma_{\min}(C(\lambda_*))$ is small or
no larger than $O(\varepsilon)$, then $\widetilde{x}$ may be poor and even
may have no accuracy; in other words, $\widetilde{x}$ may fail to converge to $x_*$.

\section{Convergence of the refined Ritz vector} \label{sec:RR}

We consider the convergence of the refined Ritz vector $\widehat{x}$,
and prove that $\widehat{x}\rightarrow x_*$ unconditionally as
$\varepsilon\rightarrow 0$.

Let us replace $T(\lambda_*)$ in decomposition \eqref{schurlike} by $T(\mu)$.
Then, by the continuity argument, the entries of $(1,1)$ and $(2,1)$-positions
in the resulting decomposition tend to zero, and $L(\mu)\rightarrow L(\lambda_*)$ in $(2,2)$-position
as $\varepsilon\rightarrow 0$. Since $T(\lambda)$ is analytic in $\lambda\in \Omega$, the
matrix $L(\lambda)$ is analytic in $\lambda\in\Omega$ too. Therefore,
for $\mu\in\mathcal{D}\subset \Omega$ where the disc $\mathcal{D}$ is
defined as in Theorem~\ref{ritznep}, we obtain
the convergent Taylor series of $L(\mu)$ at $\lambda_*$ for $\mu\in\mathcal{D}$:
\begin{eqnarray}
L(\mu)&=&L(\lambda_{*})+L^{\prime}(\lambda_{*})(\mu-\lambda_{*})+\frac{L^{\prime\prime}
(\lambda_{*})}{2}(\mu-\lambda_{*})^{2}+O((\mu-\lambda_*)^3)\nonumber\\
&=&L(\lambda_{*})+L^{\prime}(\lambda_{*})(\mu-\lambda_{*})
+\left(\frac{L^{\prime\prime}(\lambda_{*})}{2}+O(\mu-\lambda_*)\right)(\mu-\lambda_{*})^{2}. \label{sig-0}
\end{eqnarray}
As a result, for $\mu\in\mathcal{D}$, since $|\mu-\lambda_*|\leq r$, we have
\begin{eqnarray}
\left\|\frac{L^{\prime\prime}(\lambda_{*})}{2}+O(\mu-\lambda_*)\right\|&\leq&
\frac{\|L^{\prime\prime}(\lambda_{*})\|}{2}+O(|\mu-\lambda_*|)\nonumber\\
&\leq& \frac{\|L^{\prime\prime}(\lambda_{*})\|}{2}+O(r)\nonumber\\
&\leq& \frac{\|L^{\prime\prime}(\lambda_{*})\|}{2}+\eta=:\beta, \label{beta}
\end{eqnarray}
which is uniformly bounded with respect to $\mu\in\mathcal{D}\subset\Omega$, provided that, for an
arbitrarily given constant $\eta$, the disc radius $r$ is sufficiently small such that $O(r)\leq\eta$.

\begin{theorem}\label{con-Refined-vector}
Let $\beta$ be defined as in \eqref{beta}, and assume that $|\mu-\lambda_{*}|$ is small enough and satisfies
	\begin{equation}\label{assumr}
		\sigma_{\min}(L(\lambda_{*}))-\| L^{\prime}(\lambda_{*})\|
|\mu-\lambda_{*}|-\beta |\mu-\lambda_{*}|^{2}>0.
	\end{equation}
Then
\begin{eqnarray}
\| T(\mu)\widehat{x}\|
&\leq& \frac{\| T(\mu)x_{*}\|+\| T(\mu)\|\varepsilon}{\sqrt{1-\varepsilon^{2}}} \label{rres1}\\
&\leq&\frac{\| T(\mu)\|\varepsilon+\|T^{\prime}(\lambda_*)\| |\mu-\lambda_{*}|
+\gamma|\mu-\lambda_*|^2}{\sqrt{1-\varepsilon^{2}}},
\label{rres}\\
\sin\angle(x_*,\widehat{x})
&\leq& \frac{\| T(\mu)x_{*}\|+\| T(\mu)\|\varepsilon}{\sqrt{1-\varepsilon^{2}}\sigma_{\min}(L(\mu))}
\label{errorrefine1}\\
&\leq&\frac{\| T(\mu)\|\varepsilon+\|T^{\prime}(\lambda_*)\||\mu-\lambda_*|
+\gamma|\mu-\lambda_*|^2}
	{\sqrt{1-\varepsilon^{2}}\sigma_{\min}(L(\mu))}
\label{errorrefine}
\end{eqnarray}
with $\sigma_{\min}(L(\mu))\geq \sigma_{\min}(L(\lambda_{*}))-\|
L^{\prime}(\lambda_{*})\| |\mu-\lambda_{*}|-\beta |\mu-\lambda_{*}|^{2}$
and the constant $\gamma$ defined in \eqref{gamma}.
\end{theorem}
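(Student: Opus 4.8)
The plan is to establish the four inequalities by first proving the residual bound \eqref{rres1} via the minimality property of $\widehat x$, then converting it to a bound on $\sin\angle(x_*,\widehat x)$ using the Schur-like decomposition \eqref{schurlike} with $T(\lambda_*)$ replaced by $T(\mu)$, and finally feeding in the Taylor expansion \eqref{expension} of $T(\mu)$ at $\lambda_*$ to pass from the $\| T(\mu)x_*\|$ and $\| T(\mu)\|$ terms to the $O(|\mu-\lambda_*|)$ and $O(|\mu-\lambda_*|^2)$ terms.

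First I would exploit the defining property \eqref{Refin} of $\widehat x$. Writing $u=W^Hx_*$, $\widehat u=u/\sqrt{1-\varepsilon^2}$ as in the proof of Theorem~\ref{l1}, the vector $v_0=W\widehat u$ is a unit vector in $\mathcal W$ (since $\|\widehat u\|=1$), so the minimality gives $\| T(\mu)\widehat x\|=\|T(\mu)W y\|\leq \| T(\mu)W\widehat u\|$. Now $W\widehat u = \frac{1}{\sqrt{1-\varepsilon^2}}WW^Hx_* = \frac{1}{\sqrt{1-\varepsilon^2}}(x_* - W_\bot W_\bot^H x_*) = \frac{1}{\sqrt{1-\varepsilon^2}}(x_* - W_\bot u_\bot)$ with $\|u_\bot\|=\varepsilon$, hence
\begin{equation*}
\| T(\mu)\widehat x\|\leq \frac{\| T(\mu)x_* - T(\mu)W_\bot u_\bot\|}{\sqrt{1-\varepsilon^2}}\leq \frac{\| T(\mu)x_*\| + \| T(\mu)\|\varepsilon}{\sqrt{1-\varepsilon^2}},
\end{equation*}
which is \eqref{rres1}. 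To get \eqref{rres} I would invoke \eqref{expension}: $T(\mu)x_* = (\mu-\lambda_*)T'(\lambda_*)x_* + (\mu-\lambda_*)^2 R_2(\mu,\lambda_*)$ for some uniformly norm-bounded $R_2$, so $\| T(\mu)x_*\|\leq \|T'(\lambda_*)\||\mu-\lambda_*| + \gamma|\mu-\lambda_*|^2$ where $\gamma$ collects the bound on $R_2$ (this is the constant referred to as ``defined in \eqref{sig-1}''); substituting gives \eqref{rres}.

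Next, for the angle bounds, I would apply Theorem~\ref{con-eigenpair} — or, more transparently, re-run its proof with $T(\mu)$ in place of the approximate-eigenpair residual structure. Using the unitary matrix $(x_*\ X_\bot)$ and $L(\mu)=X_\bot^HT(\mu)X_\bot$, by 2-norm unitary invariance $\| T(\mu)\widehat x\|\geq \| X_\bot^H T(\mu)\widehat x\| = \| X_\bot^HT(\mu)x_* x_*^H\widehat x + L(\mu)X_\bot^H\widehat x\|\geq \sigma_{\min}(L(\mu))\|X_\bot^H\widehat x\| - \| X_\bot^HT(\mu)x_*\|$. Since $T(\mu)x_*$ is already $O(|\mu-\lambda_*|)$ and in fact absorbed by the bound \eqref{rres1} on $\| T(\mu)\widehat x\|$ itself, rearranging and using $\sin\angle(x_*,\widehat x)=\|X_\bot^H\widehat x\|$ yields $\sin\angle(x_*,\widehat x)\leq \| T(\mu)\widehat x\|/\sigma_{\min}(L(\mu)) \leq (\| T(\mu)x_*\|+\| T(\mu)\|\varepsilon)/(\sqrt{1-\varepsilon^2}\,\sigma_{\min}(L(\mu)))$, which is \eqref{errorrefine1}; combining with \eqref{rres} gives \eqref{errorrefine}. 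Finally, the lower bound on $\sigma_{\min}(L(\mu))$ is immediate from the Taylor expansion \eqref{sig-0}: $\sigma_{\min}(L(\mu))\geq \sigma_{\min}(L(\lambda_*)) - \| L'(\lambda_*)\||\mu-\lambda_*| - \beta|\mu-\lambda_*|^2$ by Weyl's inequality for singular values, and assumption \eqref{assumr} guarantees this is positive, so all denominators are legitimately nonzero.

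The main obstacle — and the only genuinely delicate point — is being careful about exactly which term $X_\bot^H T(\mu)x_*$ gets bounded by what, and ensuring the higher-order terms are handled consistently so the stated bounds with the single constant $\gamma$ and the uniform bound $\beta$ actually hold uniformly as $\varepsilon\to 0$; everything else is bookkeeping with unitary invariance, the triangle inequality, Weyl's perturbation bound for singular values, and the Taylor expansion. Unlike Theorem~\ref{con-Ritz}, no triangle inequality through $P_{\mathcal W}x_*$ is needed here because $\widehat x$ is intrinsically defined by minimization over $\mathcal W$ rather than extracted from the projected problem, which is precisely why the convergence is unconditional.
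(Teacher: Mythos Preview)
Your approach is essentially identical to the paper's: both use the minimality of $\widehat x$ against the unit vector $P_{\mathcal W}x_*/\|P_{\mathcal W}x_*\|=W\widehat u$ to obtain \eqref{rres1}, bound $\|T(\mu)x_*\|$ via the Taylor expansion \eqref{sig-1} to get \eqref{rres}, invoke Theorem~\ref{con-eigenpair} (or its proof) for the angle bound, and use Weyl's inequality on \eqref{sig-0} for the lower bound on $\sigma_{\min}(L(\mu))$. One small remark: when you write ``absorbed by the bound \eqref{rres1},'' note that the extra $\|X_\bot^HT(\mu)x_*\|\leq\|T(\mu)x_*\|$ term strictly gives a factor of $2$ on $\|T(\mu)x_*\|$ in \eqref{errorrefine1}; the paper's proof has the same slight imprecision (it applies Theorem~\ref{con-eigenpair}, whose conclusion carries the additional $\|T'(\lambda_*)\||\mu-\lambda_*|$ term, and then silently drops it), so your treatment is no less rigorous than the original and the order-correct bound \eqref{errorrefine} is unaffected.
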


\begin{proof}
Decompose $x_*$ as the orthogonal direct sum
$$
x_{*}=\frac{P_{\mathcal{W}}x_{*}}{\| P_{\mathcal{W}}x_{*}\|}\cos\angle(x_{*},\mathcal{W})
+\frac{(I-P_{\mathcal{W}})x_{*}}{\| (I-P_{\mathcal{W}})x_{*}\|}\sin\angle(x_{*},\mathcal{W}),
$$
and note that $\sin\angle(x_{*},\mathcal{W})=\varepsilon$ and
$\cos\angle(x_{*},\mathcal{W})=\sqrt{1-\varepsilon^2}$.
Then
\begin{eqnarray}
	% \nonumber to remove numbering (before each equation)
	\bigg\|T(\mu)\frac{P_{\mathcal{W}}x_{*}}{\| P_{\mathcal{W}}x_{*}\|}\bigg\| &=& \bigg\| \frac{T(\mu)}{\sqrt{1-\varepsilon^{2}}}\bigg(x_{*}-\frac{(I-P_{\mathcal{W}}x_{*})}{\| (I-P_{\mathcal{W}})x_{*}\|}\varepsilon\bigg)\bigg\| \nonumber \\
	&\leq& \frac{\| T(\mu)x_{*}\|+
\| T(\mu)\|\varepsilon}{\sqrt{1-\varepsilon^{2}}}. \label{resnorm2}
\end{eqnarray}
On the other hand, since $T(\lambda)$ is analytic in
$\lambda\in \Omega$, for $\mu\in\mathcal{D}\subset\Omega$
with the disc $\mathcal{D}$ defined in Theorem~\ref{ritznep},
we obtain
the convergent Taylor series of $T(\mu)$ at $\lambda_*$ for $\mu\in\mathcal{D}$:
\begin{eqnarray}
T(\mu)&=&L(\lambda_{*})+T^{\prime}(\lambda_{*})(\mu-\lambda_{*})+\frac{T^{\prime\prime}
(\lambda_{*})}{2}(\mu-\lambda_{*})^{2}+O((\mu-\lambda_*)^3)\nonumber\\
&=&T(\lambda_{*})+T^{\prime}(\lambda_{*})(\mu-\lambda_{*})
+\left(\frac{T^{\prime\prime}(\lambda_{*})}{2}+O(\mu-\lambda_*)\right)(\mu-\lambda_{*})^{2}. \label{sig-1}
\end{eqnarray}
As a result, for $\mu\in\mathcal{D}$, since $|\mu-\lambda_*|\leq r$, we have
\begin{eqnarray}
\left\|\frac{T^{\prime\prime}(\lambda_{*})}{2}+O(\mu-\lambda_*)\right\|&\leq&
\frac{\|T^{\prime\prime}(\lambda_{*})\|}{2}+O(|\mu-\lambda_*|)\nonumber\\
&\leq& \frac{\|T^{\prime\prime}(\lambda_{*})\|}{2}+O(r)\nonumber\\
&\leq& \frac{\|T^{\prime\prime}(\lambda_{*})\|}{2}+\eta=:\gamma, \label{gamma}
\end{eqnarray}
which is uniformly bounded in $\mu\in\mathcal{D}\subset\Omega$, provided that, for an
arbitrarily given constant $\eta$, the disc radius $r$ is sufficiently small such that $O(r)\leq\eta$.

From $T(\lambda_{*})x_{*}=0$, \eqref{sig-1} and \eqref{gamma}, we obtain
\begin{eqnarray*}
	% \nonumber to remove numbering (before each equation)
	\| T(\mu)x_{*}\| &=& \|( T(\mu)-T(\lambda_{*}))x_{*} \|\\
	&\leq& |\mu-\lambda_*|\|T^{\prime}(\lambda_*)\|+\gamma|\mu-\lambda_*|^2,
\end{eqnarray*}
%\textcolor{red}{The mean value theorem is used incorrectly. In the complex number
	%field, there is no claimed $\eta$.}
Therefore, by definition (\ref{Refin}) of the refined Ritz vector
$\widehat{x}$ and bound \eqref{resnorm2}, we have
\begin{eqnarray*}
	\| T(\mu)\widehat{x}\|&\leq&
\bigg\| T(\mu)\frac{P_{\mathcal{W}}x_{*}}{\| P_{\mathcal{W}}x_{*}\|}\bigg\|\\
&\leq&\frac{\| T(\mu)x_{*}\|+\| T(\mu)\|\varepsilon}{\sqrt{1-\varepsilon^{2}}}\\
&\leq& \frac{\| T(\mu)\|\varepsilon+\|T^{\prime}(\lambda_*)\| |\mu-\lambda_{*}|+\gamma|\mu-\lambda_*|^2}{\sqrt{1-\varepsilon^{2}}},
\end{eqnarray*}
which proves \eqref{rres1} and \eqref{rres}.

From \eqref{sig-0} and \eqref{beta}, making use of
a result due to Weyl on singular values \cite[Corollary 4.31, p.70]{Stewart1998}, we have
$$
\sigma_{\min}(L(\mu))\geq\sigma_{\min}(L(\lambda_{*}))-\|
L^{\prime}(\lambda_{*})\|\mid\mu-\lambda_{*}\mid-\beta|\mu-\lambda_{*}|^{2}.
$$
As a result, under assumption \eqref{assumr}, by applying
Theorem \ref{con-eigenpair} to the residual norm
$\|T(\mu)\widehat{x}\|$ of the approximate eigenpair $(\mu,\widehat x)$,
it follows from \eqref{rres} that
\begin{eqnarray*}
\sin\angle(x_*,\widehat{x})&\leq&
\frac{\| T(\mu)x_{*}\|+\| T(\mu)\|\varepsilon}{\sqrt{1-\varepsilon^{2}}\sigma_{\min}(L(\mu))}
\\
&\leq& \frac{\| T(\mu)\|\varepsilon+\|T^{\prime}(\lambda_*)\| |\mu-\lambda_{*}|+\gamma|\mu-\lambda_*|^2}{\sqrt{1-\varepsilon^{2}}\sigma_{\min}
		(L(\mu))},
\end{eqnarray*}
which proves \eqref{errorrefine1} and \eqref{errorrefine}.
\end{proof}

Theorem \ref{con-Refined-vector} indicates that the
residual norm $\|T(\mu)\widehat{x}\|\rightarrow 0$ and $\widehat{x}\rightarrow
x_{*}$ as $\varepsilon\rightarrow 0$. Recall from Theorem~\ref{con-Ritz} that
the convergence of the Ritz vector $\widetilde{x}$ requires that
$\sigma_{\min}(C(\lambda_*))$ be uniformly bounded away from zero, which, however,
equals zero or can be arbitrarily small when $\mu$ is a multiple Ritz value
or is close to some other Ritz values. In contrast,
the situation is fundamentally different for the refined Ritz vector
$\widehat{x}$ since
$\sigma_{\min}(L(\mu))$ must be uniformly positive for a simple $\lambda_*$ as
$\sigma_{\min}(L(\mu))\rightarrow\sigma_{\min}(L(\lambda_*))>0$ as $\mu\rightarrow\lambda_*$.

In what follows we further compare the Ritz vector $\widetilde x$ with the refined Ritz
vector $\widehat x$, and get more insight into them.
For the simple $\lambda_*$, the following result shows that
$\sigma_{\min}(T(\mu))$ must be a simple singular value of $T(\mu)$ once $\mu$ is
sufficiently close to $\lambda_{*}$, which will be exploited to
establish the uniqueness of the refined Ritz vector $\widehat{x}$.

%%\textcolor{red}{The following theorem is badly presented, and has been restated.}

\begin{lemma}\label{unique-rR-value}
For the simple eigenvalue $\lambda_*$, denote by $\sigma_{2}(T(\lambda_{*}))$
the second smallest singular value of
$T(\lambda_{*})$, and let $\gamma$ be defined by \eqref{gamma}.
If $\mu$ is sufficiently close to $\lambda_{*}$ such that
	\begin{equation}\label{assums}
		\| T^{\prime}(\lambda_{*})\||\mu-\lambda_{*}|+\gamma|\mu-\lambda_{*}|^{2}
		<\dfrac{1}{2}\sigma_{2}(T(\lambda_{*})),
	\end{equation}
then $\sigma_{\min}(T(\mu))$ is simple.
\end{lemma}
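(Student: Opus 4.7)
The plan is to combine the Taylor expansion \eqref{sig-1} with the standard (Weyl/Mirsky) perturbation bound for singular values, so that the hypothesis forces a strict gap between $\sigma_{\min}(T(\mu))$ and $\sigma_2(T(\mu))$. The starting point is to recall, as noted just before \eqref{schurlike}, that since $\lambda_*$ is simple one has $\mathrm{rank}(T(\lambda_*))=n-1$, so $\sigma_{\min}(T(\lambda_*))=0$ and $\sigma_2(T(\lambda_*))>0$. Thus at $\mu=\lambda_*$ the smallest singular value is isolated from the rest by a positive gap $\sigma_2(T(\lambda_*))$, and the task reduces to controlling how much this gap can shrink as $\mu$ moves away from $\lambda_*$.

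Next I would use the Taylor expansion \eqref{sig-1} to obtain the operator-norm estimate
\begin{equation*}
\|T(\mu)-T(\lambda_*)\|\;\leq\;\|T'(\lambda_*)\|\,|\mu-\lambda_*|+\gamma|\mu-\lambda_*|^{2},
\end{equation*}
where $\gamma$ is the uniform bound on $\|R_2(\mu,\lambda_*)\|$ appearing in \eqref{sig-1}. Applying Weyl's inequality $|\sigma_i(T(\mu))-\sigma_i(T(\lambda_*))|\leq \|T(\mu)-T(\lambda_*)\|$ simultaneously for $i=1$ (the smallest) and $i=2$, and then invoking the hypothesis \eqref{assums}, I obtain
\begin{equation*}
\sigma_{\min}(T(\mu))\;\leq\;\|T(\mu)-T(\lambda_*)\|\;<\;\tfrac{1}{2}\sigma_2(T(\lambda_*))
\end{equation*}
and
\begin{equation*}
\sigma_2(T(\mu))\;\geq\;\sigma_2(T(\lambda_*))-\|T(\mu)-T(\lambda_*)\|\;>\;\tfrac{1}{2}\sigma_2(T(\lambda_*)).
\end{equation*}
Chaining these two inequalities gives $\sigma_{\min}(T(\mu))<\sigma_2(T(\mu))$, which is exactly the statement that $\sigma_{\min}(T(\mu))$ is a simple singular value of $T(\mu)$.

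There is no real obstacle here; the argument is essentially a quantified continuity/gap argument. The only point that requires a little care is making sure the inequalities are strict so that simplicity (as opposed to mere non-increase of multiplicity) follows: this is precisely why the hypothesis \eqref{assums} uses the factor $\tfrac{1}{2}$ rather than $1$, and why I write the Weyl bounds with a strict inequality coming from \eqref{assums}. It is also worth remarking that the same computation yields the by-product $\sigma_{\min}(T(\mu))\to 0$ and $\sigma_2(T(\mu))\to \sigma_2(T(\lambda_*))>0$ as $\mu\to\lambda_*$, which is consistent with the subsequent use of this lemma to establish uniqueness of the refined Ritz vector $\widehat{x}$ via the one-dimensional right singular subspace of $T(\mu)$ associated with its smallest singular value.
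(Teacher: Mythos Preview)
Your proposal is correct and follows essentially the same route as the paper: use the Taylor expansion \eqref{sig-1} to bound $\|T(\mu)-T(\lambda_*)\|$, apply Weyl's inequality to the two smallest singular values, and then use hypothesis \eqref{assums} to force $\sigma_{\min}(T(\mu))<\tfrac{1}{2}\sigma_2(T(\lambda_*))<\sigma_2(T(\mu))$. The paper's proof is organized in exactly this way, so there is nothing to add.
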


\begin{proof}
Under the assumption that $\lambda_*$ is simple, $\sigma_{\min}(T(\lambda_{*}))=0$ is a simple
singular value of $T(\lambda_{*})$, which means that
$$\sigma_{2}(T(\lambda_{*}))>\sigma_{\min}(T(\lambda_{*}))=0.$$
From (\ref{sig-1}) and \eqref{gamma}, we obtain
$$
\| T(\mu)-T(\lambda_{*})\|\leq \| T^{\prime}(\lambda_{*})\|
|\mu-\lambda_{*}|+\gamma|\mu-\lambda_{*}|^{2}.
$$
Therefore, by the standard perturbation theory,
the errors of the $i$th smallest singular values $\sigma_i(\cdot)$ of $T(\mu)$
and those of $T(\lambda_*)$ satisfy
$$
|\sigma_{i}(T(\mu))-\sigma_{i}(T(\lambda_{*}))|\leq\|
T^{\prime}(\lambda_{*})\| |\mu-\lambda_{*}|+\gamma|\mu-\lambda_{*}|^{2}.
$$
Therefore,
\begin{equation}\label{rR1}
	\sigma_{2}(T(\mu))\geq\sigma_{2}(T(\lambda_{*}))-\| T^{\prime}(\lambda_{*})\|
|\mu-\lambda_{*}|-\gamma|\mu-\lambda_{*}|^{2}
\end{equation}
and, by \eqref{assums},
\begin{equation}\label{sigma1}
	\sigma_{\min}(T(\mu))=\sigma_1(T(\mu))\leq\| T^{\prime}(\lambda_{*})
\| |\mu-\lambda_{*}|+\gamma|\mu-\lambda_{*}|^{2}
	<\frac{1}{2}\sigma_{2}(T(\lambda_{*})),
\end{equation}
where \eqref{sigma1} holds because of
$\sigma_{\min}(T(\lambda_*))=\sigma_1(T(\lambda_*))=0$.

On the other hand, from (\ref{assums}) and (\ref{rR1}) we obtain
$$\sigma_{2}(T(\mu))>\frac{1}{2}\sigma_{2}(T(\lambda_{*})),$$
which, together with \eqref{sigma1}, shows that
$\sigma_{\min}(T(\mu))$ is a simple singular value of $T(\mu)$.
\end{proof}

Based on Lemma \ref{unique-rR-value}, we can prove that, unlike the Ritz
vector $\widetilde x$, the refined Ritz vector $\widehat x$ must be
unique as $\varepsilon\rightarrow 0$.

\begin{theorem}\label{unique-rR-vector}
For the simple $\lambda_*$, let $\widehat{\sigma}_1\leq\widehat{\sigma}_2\leq\cdots\leq \widehat{\sigma}_m$
be the singular values of $T(\mu)W$ with $m$ being the number of columns of $W$, and
$|\mu-\lambda_*|$ be sufficiently small such that
\begin{equation}  \label{assum2}
\widehat{\sigma}_{1}<\dfrac{1}{2}\sigma_{2}(T(\lambda_{*}))-
	\|T^{\prime}(\lambda_{*})\| |\mu-\lambda_{*}|
\mbox{\ \ and\ \ }
\sigma_2(T(\lambda_{*}))>2\gamma|\mu-\lambda_{*}|^{2}.
\end{equation}
Then $\widehat{\sigma}_{1}$ is simple, and $\widehat{x}$ is unique.
\end{theorem}

\begin{proof}
In terms of the eigenvalue interlacing property of the
Hermitian matrix
$$
\begin{pmatrix}
	W^{H} \\
	W_{\bot}^{H} \\
\end{pmatrix}T^{H}(\mu)T(\mu)
(W \ W_{\bot})=\begin{pmatrix}
	W^{H}T(\mu)^{H}T(\mu)W & W^{H}T(\mu)^{H}T(\mu)W_{\bot}\\
	W_{\bot}^{H}T(\mu)^{H}T(\mu)W & W_{\bot}^{H}T(\mu)^{H}T(\mu)W_{\bot}  \\
\end{pmatrix},
$$
we have $\widehat{\sigma}_{2}\geq\sigma_{2}(T(\mu))$. Then it
follows from \eqref{rR1} and \eqref{assum2} that
\begin{eqnarray*}
\widehat{\sigma}_{2}-\widehat{\sigma}_{1}&\geq&\sigma_{2}(T(\mu))-\widehat{\sigma}_{1}\\
	&\geq&\sigma_{2}(T(\lambda_{*}))
	-\|T^{\prime}(\lambda_{*})\| |\mu-\lambda_{*}|-\gamma |\mu-\lambda_*|^2\\
	&& \ \ \ -\frac{1}{2}\sigma_{2}(T(\lambda_{*}))+\|T^{\prime}(\lambda_{*})\|
	|\mu-\lambda_{*}| \\
	&\geq& \frac{1}{2}\sigma_{2}(T(\lambda_{*}))-\gamma |\mu-\lambda_*|^2>0.
\end{eqnarray*}
Therefore, $\widehat{\sigma}_{1}$ is simple, and $\widehat{x}$ is unique.
\end{proof}

Notice that, by definition, $\widehat{\sigma}_1=\|T(\mu)\widehat{x}\|$.
Relation~\eqref{rres} shows that $\widehat{\sigma}_1\rightarrow 0$ as
$\mu\rightarrow\lambda_*$, which, by Theorem~\ref{ritznep}, is met
as $\varepsilon\rightarrow 0$. Therefore, assumption \eqref{assum2} for
$\widehat{\sigma}_1$
and $\sigma_2(T(\lambda_*)$ must be fulfilled for $\varepsilon$ sufficiently
small. This theorem generalizes Theorem 2.2 of
\cite{Jia2004} to the NEP case.

We now construct an example to illustrate our convergence results on the Ritz vector
$\widetilde x$ and refined Ritz vector $\widehat x$.

\begin{example}\label{ex1}
	Consider the REP $T(\lambda)x=0$ with
$$	
T(\lambda)=\begin{pmatrix}
		\lambda & 1 & \lambda^{2} \\
		1 & \lambda & 0 \\
		0 & 0 & \frac{\lambda}{\lambda-1} \\
	\end{pmatrix}.
$$
\end{example}
Since  $\text{det}(T(\lambda))=\lambda(\lambda+1)$, it has two eigenvalues $-1$ and $0$,
 each of which has algebraic and geometric multiplicities one; see \cite{Guttel}
for the definition of algebraic and geometric multiplicities of an eigenvalue.
Suppose that we want to compute the eigenvalue $\lambda_{*}=0$ and the
associated eigenvector $x_*=(0,0,1)^T$. We generate the subspace $\mathcal{W}$
by the orthonormal matrix
$$W=\begin{pmatrix}
	0 & 1 \\
	0 & 0\\
	1 & 0 \\
\end{pmatrix},
$$
which contains the desired $x_*$ exactly, i.e., $\varepsilon=0$.
The resulting projected matrix-valued function
$$B(\lambda)=W^{H}T(\lambda)W=\begin{pmatrix}
\frac{\lambda}{\lambda-1} & 0 \\
	\lambda^{2} & \lambda \\
\end{pmatrix}.
$$
Since $\text{det}(B(\lambda))=\frac{\lambda^{2}}{\lambda-1}$, its eigenvalues,
i.e., the Ritz values, are $\mu=0$, whose
algebraic and geometric multiplicities are equal to two.
Clearly, $\mu=\lambda_{*}=0$. However, observe that
any two-dimensional nonzero vector $z$ satisfies $B(0)z=0$, which means
that any two-dimensional nonzero vector is an eigenvector $z$ of $B(0)$.
Therefore, the Ritz vector $\widetilde{x}=Wz$ is not unique, and there are
two linearly independent ones, each of which is an approximation to
$x_*$, causing the failure of the Rayleigh--Ritz method because $B(0)$ itself
does not give us any clue to which vector should be chosen. For instance, we
might choose $z=(\frac{1}{\sqrt{2}},\frac{1}{\sqrt{2}})^{T}$, in which case we
get a Ritz vector
$$
\widetilde{x}=(\frac{1}{\sqrt{2}},0,\frac{1}{\sqrt{2}})^{T},
$$
a meaningless approximation to $x_*$, and the residual norm
$$
\|T(0)\widetilde{x}\|=\frac{1}{\sqrt{2}}=O(\|T(0)\|).
$$

In contrast, the matrix
$$T(0)W=\begin{pmatrix}
	0 & 0  \\
	0  & 1 \\
	0 & 0  \\
\end{pmatrix}
$$
is column rank deficient and has rank one, and
its right singular vector $y$ with the smallest singular
value zero is unique and equals $(1,0)^T$. Therefore, the refined Ritz
vector $\widehat{x}=Wy$ is unique and $\widehat x=x_*$, and the residual norm $\|T(0)\widehat{x}\|=0$.
This example demonstrates the great superiority of the refined Ritz vector to
the Ritz vector, and shows that the refined Rayleigh--Ritz method perfectly fixes
the failure deficiency of the Rayleigh--Ritz method.

\section{Bounds for the error of the Ritz vector and the refined Ritz vector}\label{sec:bound}

We continue exploring the Ritz vector and the refined Ritz vector,
derive lower and upper bounds for $\sin\angle(\widetilde{x},\widehat{x})$,
and shed more light on these two vectors and on the implications of the bounds
for the residual norms obtained by the Rayleigh--Ritz method and the refined
Rayleigh--Ritz method.

Recall that the refined Ritz vector $\widehat{x}=Wy$ with $y$ being the
right singular vector of $T(\mu)W$ corresponding to its smallest singular
value $\widehat{\sigma}_{1}$, and keep in mind the Ritz vector
$\widetilde{x}=Wz$ and \eqref{schurB}. We can establish the following results.

\begin{theorem}\label{bound-R} With decomposition \eqref{schurB}, if $\sigma_{\min}(C(\mu))>0$, then
	\begin{equation}\label{bound-Sin}
		\frac{\widehat{\sigma}_{1}\|
(WZ_{\bot})^{H}s\|}{\sigma_{\max}(C(\mu))}\leq
		\sin\angle(\widetilde{x},\widehat{x})\leq\frac{\widehat{\sigma}_{1}\|
W^{H}s\|}{\sigma_{\min}(C(\mu))} 	
\end{equation}
with $s$ being the left singular vector of $T(\mu)W$ corresponding to its
smallest singular value $\widehat{\sigma}_{1}$.
\end{theorem}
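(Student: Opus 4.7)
The plan is to derive a single matrix identity that relates $Z_\bot^H y$ to the left singular vector $s$ through $C(\mu)$, from which both halves of \eqref{bound-Sin} drop out immediately.

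First, I would reduce the geometry from $\mathbb{C}^{n}$ to $\mathbb{C}^{m}$. Since the columns of $W$ are orthonormal, $\sin\angle(\widetilde{x},\widehat{x}) = \sin\angle(Wz,Wy) = \sin\angle(z,y)$, and because $(z\ Z_{\bot})$ is a unitary matrix of order $m$, this equals $\|Z_{\bot}^H y\|$. So the entire task reduces to bounding $\|Z_{\bot}^H y\|$ from above and below.

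Next, I would combine the defining SVD relation $T(\mu)Wy = \widehat{\sigma}_{1}s$ with $B(\mu)=W^{H}T(\mu)W$ by premultiplying by $W^{H}$ to get $B(\mu)\,y = \widehat{\sigma}_{1}\,W^{H}s$. Using decomposition \eqref{schurB}, I would write $y=(z^{H}y)\,z+Z_{\bot}(Z_{\bot}^{H}y)$ and observe that $B(\mu)z=0$, so the term containing $z^{H}y$ vanishes entirely. Applying $Z_{\bot}^{H}$ to both sides then eliminates the off-diagonal contribution coming from the $(1,2)$-block and leaves the clean lower-block identity
$$
C(\mu)\,Z_{\bot}^{H} y \;=\; \widehat{\sigma}_{1}\,Z_{\bot}^{H}W^{H}s \;=\; \widehat{\sigma}_{1}\,(WZ_{\bot})^{H}s.
$$

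From this one relation, under the hypothesis $\sigma_{\min}(C(\mu))>0$, both bounds follow at once. Taking norms and using $\|C(\mu)\,Z_{\bot}^{H}y\|\le\sigma_{\max}(C(\mu))\|Z_{\bot}^{H}y\|$ yields the lower bound in \eqref{bound-Sin}. Inverting $C(\mu)$ gives $\|Z_{\bot}^{H}y\|\le\widehat{\sigma}_{1}\|(WZ_{\bot})^{H}s\|/\sigma_{\min}(C(\mu))$, after which the upper bound in \eqref{bound-Sin} comes from the elementary inequality $\|(WZ_{\bot})^{H}s\|\le\|W^{H}s\|$, valid because $(WZ_{\bot})^{H}s$ is merely one orthogonal component of $W^{H}s$ in the unitary decomposition along $z$ and $Z_{\bot}$. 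I expect the only delicate point to be the block-extraction step: one must verify that the Schur-like structure of \eqref{schurB}, whose left column in the projected matrix is zero, is exactly what makes $z^{H}y$ drop out of the lower block and leaves a self-contained equation for $Z_{\bot}^{H}y$. Once that step is carried through, the remainder is routine use of singular-value inequalities and I do not anticipate further obstacles.
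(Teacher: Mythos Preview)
Your proposal is correct and follows essentially the same route as the paper: both derive the key identity $C(\mu)\,Z_{\bot}^{H}y=\widehat{\sigma}_{1}(WZ_{\bot})^{H}s$ from $B(\mu)y=\widehat{\sigma}_{1}W^{H}s$ via the Schur-like decomposition \eqref{schurB}, and then read off the two bounds using $\sin\angle(\widetilde{x},\widehat{x})=\|Z_{\bot}^{H}y\|$ together with standard singular-value inequalities and $\|(WZ_{\bot})^{H}s\|\le\|W^{H}s\|$. The only cosmetic difference is that you eliminate the $z$-component by invoking $B(\mu)z=0$ directly, whereas the paper carries out the equivalent block-matrix multiplication explicitly.
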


\begin{proof}
Since $\widehat{\sigma}_{1}$ is the smallest singular value of $T(\mu)W$
and $y$ is its associated right singular vector, we have
\begin{equation}\label{By}
	B(\mu)y=W^{H}T(\mu)Wy=\widehat{\sigma}_{1}W^{H}s.
\end{equation}
By \eqref{schurB} and \eqref{By} we obtain
\begin{eqnarray}
	% \nonumber to remove numbering (before each equation)
	\begin{pmatrix}
		z^{H} \\
		Z_{\bot}^{H} \\
	\end{pmatrix}B(\mu)(
		z\  Z_{\bot})
\begin{pmatrix}
		z^{H} \\
		Z_{\bot}^{H} \\
	\end{pmatrix}y
	&=& \begin{pmatrix}
		0 & z^{H}B(\mu)Z_{\bot} \\
		0 & C(\mu) \\
	\end{pmatrix}\begin{pmatrix}
		z^{H} \\
		Z_{\bot}^{H} \\
	\end{pmatrix}y
	\nonumber \\
	&=& \begin{pmatrix}
		z^{H}B(\mu)Z_{\bot}Z_{\bot}^Hy \\
		C(\mu)Z_{\bot}^{H}y \\
	\end{pmatrix} \nonumber\\
	&=&\begin{pmatrix}
		\widehat{\sigma}_{1}z^{H}W^{H}s \\
		\widehat{\sigma}_{1}Z_{\bot}^{H}W^{H}s \\
	\end{pmatrix}, \label{lasteq}
\end{eqnarray}
respectively, where equality \eqref{lasteq} follows from \eqref{By} by left premultiplying
$B(\mu)y$ with $(z\  Z_{\bot})^H.$
Therefore,
$$
C(\mu)Z_{\bot}^{H}y=\widehat{\sigma}_{1}Z_{\bot}^{H}W^{H}s.
$$
By the orthonormality of $W$ and the above relation, we obtain
\begin{equation}\label{l-u-s}
	\sin\angle(\widetilde{x},\widehat{x})=\sin\angle(z,y)=\| Z_{\bot}^{H}y\|
=\widehat{\sigma}_{1}\| C^{-1}(\mu)(WZ_{\bot})^{H}s\|,
\end{equation}
from which it follows that
\begin{equation}\label{l-u}
	\frac{\widehat{\sigma}_{1}\| (WZ_{\bot})^{H}s\|}{\sigma_{\max}(C(\mu))}\leq
	\sin\angle(\widetilde{x},\widehat{x})\leq \frac{\widehat{\sigma}_{1}\|
(WZ_{\bot})^{H}s\|}{\sigma_{\min}(C(\mu))}\leq
	\frac{\widehat{\sigma}_{1}\| W^{H}s\|}{\sigma_{\min}(C(\mu))}.
\end{equation}
~
\end{proof}

By Theorem~\ref{con-Refined-vector}, since $\widehat{\sigma}_1\rightarrow 0$
and $\widehat{x}\rightarrow x_*$ as $\varepsilon\rightarrow 0$, the
upper bound in \eqref{bound-Sin} shows that $\sigma_{\min}(C(\mu))>0$
uniformly is a sufficient condition for $\widetilde{x}\rightarrow x_*$.
This uniform condition is in accordance with that in
Theorem~\ref{con-Ritz}, which is required to guarantee the convergence of
$\widetilde{x}$.
The lower bound tends to zero provided that $\widehat{\sigma}_1\rightarrow 0$.
Therefore, generally  $\sin\angle(\widetilde{x},\widehat{x})=0$ if and only if
$\widehat{\sigma}_{1}=0$; in other words, if $\widehat{x}\not=x_*$,   then
$\widetilde{x}\neq\widehat{x}$ generally. This theorem is an extension of
Theorem 3.1 in \cite{Jia2004}.

Denote $\widetilde{r}=T(\mu)\widetilde{x}$ and $\widehat{r}=T(\mu)\widehat{x}$.
By definition, we have $\|\widehat{r}\|=\widehat{\sigma}_{1}$
and $\|\widehat{r}\|\leq\|\widetilde{r}\|$.
The following theorem gets more insight into these two residual norms.

\begin{theorem}\label{normR}
	Let $\widehat{\sigma}_{1}$ and $\widehat{\sigma}_{m}$ be the smallest and
	largest singular values of $T(\mu) W$. The following results hold:
	\begin{equation}\label{rr1}
		\cos^{2}\angle(\widetilde{x},\widehat{x})+	\bigg(\frac{\widehat{\sigma}_{2}}
{\widehat{\sigma}_{1}}\bigg)^{2}
\sin^{2}\angle(\widetilde{x},\widehat{x})\leq\frac{\| \widetilde{r}\|^{2}}{\|
\widehat{r}\|^{2}}\leq\cos^{2}\angle(\widetilde{x},\widehat{x})+
\bigg(\frac{\widehat{\sigma}_{m}}{\widehat{\sigma}_{1}}\bigg)^{2}
\sin^{2}\angle(\widetilde{x},\widehat{x}).
	\end{equation}
\end{theorem}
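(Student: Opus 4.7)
The plan is to reduce the proof to elementary bookkeeping with the full SVD of $T(\mu)W$. Writing $T(\mu)W=U\widehat{\Sigma}V^H$ with singular values $\widehat{\sigma}_1\le\widehat{\sigma}_2\le\cdots\le\widehat{\sigma}_m$ on the diagonal of $\widehat{\Sigma}$ and with columns $v_1,\ldots,v_m$ and $u_1,\ldots,u_m$ of $V$ and $U$, the definition of the refined Ritz vector gives $y=v_1$, so $\widehat{r}=T(\mu)\widehat{x}=\widehat{\sigma}_1 u_1$ and $\|\widehat{r}\|=\widehat{\sigma}_1$.

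Next I would expand the coordinate vector of the Ritz vector as $z=\sum_{i=1}^m\alpha_i v_i$ with $\sum_i|\alpha_i|^2=1$. Since the columns of $W$ are orthonormal, the angle between $\widetilde{x}=Wz$ and $\widehat{x}=Wy$ equals the angle between $z$ and $v_1$ in $\mathbb{C}^m$, so that $\cos\angle(\widetilde{x},\widehat{x})=|\alpha_1|$ and $\sin^2\angle(\widetilde{x},\widehat{x})=\sum_{i\ge 2}|\alpha_i|^2$. Applying $T(\mu)W$ to $z$ and using the orthonormality of the $u_i$ then yields the exact identity
$$
\|\widetilde{r}\|^2=\Bigl\|\sum_{i=1}^m\alpha_i\widehat{\sigma}_i u_i\Bigr\|^2
=|\alpha_1|^2\widehat{\sigma}_1^2+\sum_{i\ge 2}|\alpha_i|^2\widehat{\sigma}_i^2.
$$

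The desired two-sided bound \eqref{rr1} then follows immediately by replacing each $\widehat{\sigma}_i^2$ with $i\ge 2$ from below by $\widehat{\sigma}_2^2$ and from above by $\widehat{\sigma}_m^2$, factoring out the common $\sum_{i\ge 2}|\alpha_i|^2=\sin^2\angle(\widetilde{x},\widehat{x})$, and dividing throughout by $\widehat{\sigma}_1^2=\|\widehat{r}\|^2$. There is no real obstacle here: once the SVD is in place, the statement reduces to the Pythagorean theorem applied to the spectral expansion of $\widetilde{r}$. The only point worth flagging is the tacit assumption $\widehat{\sigma}_1>0$ needed to make the ratio $\|\widetilde{r}\|^2/\|\widehat{r}\|^2$ well defined, which merely rules out the degenerate case in which the refined Ritz pair is already an exact eigenpair of the projected problem and the comparison of the two residual norms is of no interest.
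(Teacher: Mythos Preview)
Your proof is correct and is precisely the standard SVD argument that the paper has in mind: the paper omits the proof entirely and refers to Theorem~4.1 of \cite{Jia2004}, whose proof is exactly the expansion of $z$ in the right singular vectors of $T(\mu)W$ followed by the Pythagorean identity $\|\widetilde{r}\|^2=\sum_i|\alpha_i|^2\widehat{\sigma}_i^2$ that you wrote down. Your remark on the tacit hypothesis $\widehat{\sigma}_1>0$ is apt and matches the paper's later discussion after the theorem.
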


\begin{proof}
Note $\| \widehat{r}\|=\widehat{\sigma}_{1}$. The proof is similar to that of Theorem 4.1 in \cite{Jia2004} and is thus omitted.
\end{proof}

The second terms of the lower and upper bounds in
\eqref{rr1} play a key role in deciding the size of $\|\widetilde{r}\|/
\| \widehat{r}\|$. We may have $\|\widehat{r}\|\ll\|\widetilde{r}\|$
because the lower and upper bounds for $\|\widetilde{r}\|/\| \widehat{r}\|$
in (\ref{rr1}) may be much bigger than one. This is because
$\sin\angle(\widetilde{x},\widehat{x})$
may tend to zero much more slowly than $\widehat{\sigma}_{1}$ and
it may be not small and even may be arbitrarily close to one,
so that the lower and upper bounds are arbitrarily large.
As a matter of fact, in the previous example, we have
$\widehat{\sigma}_1=\|\widehat{r}\|=0$, and both
the lower and upper bounds are thus infinite.

Let us further explore Example~\ref{ex1}. In practice it is rare that
$\varepsilon=0$. However, a small $\varepsilon$ may still make the
Rayleigh--Ritz method fail.
We show that the Ritz vector $\widetilde{x}$ may have no accuracy
no matter how small $\varepsilon$ is, while the refined Rayleigh--Ritz
method works perfectly.

\begin{example} \label{ex2}
In Example~\ref{ex1}, perturb $W$ by a random normal deviation matrix
with the standard deviation $10^{-4}$, and orthonormalize $W$.
Then $\varepsilon=O(10^{-4})$, and
the projected matrix-valued function
$$
B(\lambda)=\frac{1}{\lambda-1}\begin{pmatrix}
	b_{11}(\lambda) & b_{12}(\lambda) \\
	b_{21}(\lambda) & b_{22}(\lambda) \\
\end{pmatrix}
$$
with
\begin{eqnarray*}
b_{11}(\lambda)&=&8.8849\times 10^{-5}\lambda^{3}-8.8828\times 10^{-5}\lambda^{2}+
\lambda+2.0385\times 10^{-8},
\\
b_{12}(\lambda)&=&7.8972\times 10^{-9}\lambda^{3}-8.8891\times 10^{-5}
\lambda^{2}+2.9251\times 10^{-4}\lambda-1.1475\times 10^{-4},
\\
b_{21}(\lambda)&=&-\lambda^{3}+\lambda^{2}+2.9251\times 10^{-4}\lambda-1.1475\times 10^{-4},
\\
b_{22}(\lambda)&=&-8.8883\times 10^{-5}\lambda^{3}+1.0001\lambda^{2}-1.0006\lambda
+5.8885\times 10^{-4}.
\end{eqnarray*}

Notice that, in the numerator of $\det(B(\lambda))=0$,
the coefficients of $\lambda^6$ and $\lambda^5$ are exactly zero.
The eigenvalues of $B(\lambda)$ are
$$
7.3993\times 10^{3},\ -4.0016\times 10^{3},\ 5.6570\times 10^{-4}, \ 2.3256\times 10^{-5}.
$$
According to Algorithm~\ref{alg-RR}, we take the smallest $2.3256\times 10^{-5}$ in
magnitude to approximate $\lambda_*=0$, whose error
is $O(\varepsilon)$ and confirms that the error bound \eqref{ritzbound} in Theorem~\ref{ritznep}
is sharp where $m_{\mu}=1$. The associated Ritz vector
$$
\widetilde{x}=(1.9873\times 10^{-1},5.3892\times 10^{-5},-9.8005\times 10^{-1})^{T},
$$
is an approximation to the desired $x_*$ with little accuracy
since
$$
\sin\angle(\widetilde{x},x_*)=1.9873\times 10^{-1}=O(1)
$$
and
$$
\|\widetilde{r}\|=\|T(\mu)\widetilde{x}\|=1.9873\times 10^{-1}=O(\|T(\mu)\|).
$$
In contrast, the refined Ritz vector
$$
\widehat{x}=(-2.8435\times 10^{-8},-1.1469\times 10^{-4},1)^{T},
$$
and
\begin{eqnarray*}
\sin\angle(\widehat{x},x_*)&=&1.1469\times 10^{-4}=O(\varepsilon),\\
\|\widehat{r}\|=\|T(\mu)\widehat{x}\|&=&1.1703\times 10^{-4}=O(\varepsilon).
\end{eqnarray*}
Therefore, $\widehat{x}$ is an excellent approximation to $x_*$.
Furthermore,
$$
\sin\angle(\widetilde{x},\widehat{x})=1.9872\times 10^{-1}=O(1)
$$
and the ratio
$$
\dfrac{\|\widehat{r}\|}{\|\widetilde{r}\|}=5.8887\times 10^{-4}=O(\varepsilon).
$$
Note that $\widehat{\sigma}_{1}=1.1703\times 10^{-4}$. Then we see that the lower and
upper bounds in (\ref{l-u}) are very close and they estimate the ratio very
accurately.

As a matter of fact, for this example, if we perturb $W$ by a random normal
deviation matrix with an arbitrarily small standard deviation
$\varepsilon$  or, more generally, by a general perturbation of size $O(\varepsilon)$,
then, by the continuity of eigenvalues of
$B(\lambda)$ in its elements, there are always two Ritz values that are simple but
close to the simple eigenvalue $\lambda_*=0$ with errors $O(\varepsilon)$. Therefore,
$\sigma_{\min}(C(\lambda_*))\rightarrow 0$ and $\sigma_{\min}(L(\mu))=O(\|T(\mu)\|)$
 in bounds \eqref{sin-x1} and \eqref{errorrefine} as $\varepsilon\rightarrow 0$,
respectively. Consequently, bound \eqref{sin-x1} is much bigger than $O(\varepsilon)$
and may be $O(1)$, bound \eqref{errorrefine} and
bound~\eqref{rres} for the residual norm $\|\widehat{r}\|$ are definitely $O(\varepsilon)$,
indicating that we may have
$$
O(\varepsilon)\ll \sin\angle(\widetilde{x},x_*)=O(1) \mbox{ and }
O(\varepsilon)\ll \|\widetilde{r}\|=O(\|T(\mu)\|)
$$
but definitely have
$$
\sin\angle(\widehat{x},x_*)\leq O(\varepsilon)\ \mbox{ and }
\|\widehat{r}\|\leq O(\varepsilon).
$$
Therefore, the refined Rayleigh--Ritz method works excellently and may work much better
than the Rayleigh--Ritz method; the Ritz vector $\widetilde x$ may be very inaccurate and
even may have no accuracy even if it is unique, and the corresponding
residual norm $\|\widetilde{r}\|$
not only may not tend to zero but also is not small.

In what follows we numerically confirm the above claims by taking the four
$\varepsilon=10^{-3},10^{-4},10^{-6},10^{-8}$. For each $\varepsilon$,
we generate 100 random matrices with the standard deviation $\varepsilon$.
For the Rayleigh--Ritz method, we compute the 100 errors $\sin\angle(\widetilde{x},x_*)$
and residual norms $\|\widetilde{r}\|$, record their maximum, minimum and mean values.
For the refined Rayleigh--Ritz method, we do the same things. Table~\ref{table1}
reports the results obtained;  for each of the four $\varepsilon$,
Figure~\ref{fig1} depicts the 100 values of
$\sin\angle(\widetilde{x},x_*)$'s, $\|\widetilde{r}\|$'s and $\sin\angle(\widehat{x},x_*)$'s,
$\|\widehat{r}\|$'s.

\begin{table}[htbp]  \label{table1}
	\caption{The minimum, maximum, and mean values of $\sin\angle(\widetilde{x},x_*)$'s,
$\|\widetilde{r}\|$'s, $\sin\angle(\widehat{x},x_*)$'s and $\|\widehat{r}\|$'s.
		\label{tab:its0}}
	\begin{center}
		\tabcolsep5mm
		\def\arraystretch{1.3}
		\small
		\begin{tabular}{c|c|c|c}
			\hline
	$\varepsilon=10^{-3}$	& minimum&maximum& mean
			\tabularnewline
			\hline
$\sin\angle(\widetilde{x},x_*)$ &  $3.1025\times10^{-3}$ &  $7.0742\times10^{-1}$  &  $4.5453\times10^{-1}$   \tabularnewline
$\|\widetilde{r}\|$&    $3.2070\times10^{-3}$ &  $7.0742\times10^{-1}$  &  $4.5453\times10^{-1}$   \tabularnewline
$\sin\angle(\widehat{x},x_*)$ &  $1.8608\times10^{-6}$ &  $2.5848\times10^{-3}$  &  $7.8344\times10^{-4}$   \tabularnewline
$\|\widehat{r}\|$ &  $2.0850\times10^{-6}$ &  $7.0711\times10^{-3}$  &  $4.5455\times10^{-3}$   \tabularnewline
			\hline
						\hline
			$\varepsilon=10^{-4}$	& minimum&maximum& mean
 			\tabularnewline
			\hline
			$\sin\angle(\widetilde{x},x_*)$ &  $2.1057\times10^{-3}$ &  $7.0714\times10^{-1}$  &  $4.5455\times10^{-1}$   \tabularnewline
			$\|\widetilde{r}\|$&    $2.1057\times10^{-3}$ &  $7.0714\times10^{-1}$  &  $4.5455\times10^{-1}$   \tabularnewline
			$\sin\angle(\widehat{x},x_*)$ &  $1.7069\times10^{-7}$ &  $2.5880\times10^{-4}$  &  $7.8372\times10^{-5}$   \tabularnewline
			$\|\widehat{r}\|$ &  $1.7069\times10^{-7}$ &  $3.6592\times10^{-4}$  &  $1.2777\times10^{-4}$   \tabularnewline
			\hline
							\hline
			$\varepsilon=10^{-6}$	& minimum&maximum& mean
			\tabularnewline
			\hline
			$\sin\angle(\widetilde{x},x_*)$ &  $2.0849\times10^{-3}$ &  $7.0711\times10^{-1}$  &  $4.5455\times10^{-1}$   \tabularnewline
			$\|\widetilde{r}\|$&    $2.0850\times10^{-3}$ &  $7.0711\times10^{-1}$  &  $4.5455\times10^{-1}$   \tabularnewline
			$\sin\angle(\widehat{x},x_*)$ &  $1.6900\times10^{-9}$ &  $2.5883\times10^{-6}$  &  $7.8375\times10^{-7}$   \tabularnewline
			$\|\widehat{r}\|$ &  $1.6900\times10^{-9}$ &  $3.7936\times10^{-6}$  &  $1.3725\times10^{-6}$   \tabularnewline
			\hline
								\hline
			$\varepsilon=10^{-8}$	& minimum&maximum& mean
			\tabularnewline
			\hline
			$\sin\angle(\widetilde{x},x_*)$ &  $6.8201\times10^{-3}$ &  $1.0000\times10^{0}$  &  $4.6453\times10^{-1}$   \tabularnewline
			$\|\widetilde{r}\|$&    $6.8208\times10^{-3}$ &  $1.0000\times10^{0}$  &  $4.6453\times10^{-1}$   \tabularnewline
			$\sin\angle(\widehat{x},x_*)$ &  $1.6898\times10^{-11}$ &  $2.5883\times10^{-8}$  &  $7.8375\times10^{-9}$   \tabularnewline
			$\|\widehat{r}\|$ &  $1.6898\times10^{-11}$ &  $3.6604\times10^{-8}$  &  $1.2472\times10^{-8}$   \tabularnewline
			\hline
		\end{tabular}
	\end{center}
\end{table}

For the Rayleigh--Ritz method, Table~\ref{table1} confirms our claims. As we see, the maximum and
mean values of $\sin\angle(\widetilde{x},x_*)$'s and $\|\widetilde{r}\|$'s are all $O(1)$
and remain unchanged essentially for the four greatly varying $\varepsilon$; their minima $O(10^{-3})$
are almost independent of the size of $\varepsilon$ and are {\em several orders bigger} than
$\varepsilon=10^{-6},10^{-8}$, showing that the $\widetilde x$ are poor approximations to
$x_*$. Furthermore, for each $\varepsilon$, keeping in mind the minimum $\sin\angle(\widetilde{x},x_*)$
and $\|\widetilde{r}\|$, we find that
the errors of Ritz vectors $\widetilde{x}$ attain $\sin\angle(\widetilde{x},x_*)=O(10^{-3})$
and $\|\widetilde{r}\|=O(10^{-3})$ only for six ones among the 100 test matrices, as
is observed from the upper ones of Figure \ref{fig1} (a)-(d).
Therefore, the Rayleigh--Ritz method really works poorly for this example.

In contrast, the refined Rayleigh--Ritz method works perfectly. As Table~\ref{table1} indicates clearly,
for the given four $\varepsilon$, the maximum error $\sin\angle(\widehat{x},x_*)=O(\varepsilon)$ and
the maximum residual norm $\|\widehat{r}\|=O(\varepsilon)$ always. The lower ones of Figure
\ref{fig1} (a)-(d) exhibit the 100 errors $\sin\angle(\widehat{x},x_*)$ and the 100 residual norms
$\|\widehat{r}\|$.

\begin{figure}[htbp]  	
\centering\tabcolsep=0mm
	{\footnotesize{\doublerulesep0.5pt
			\begin{tabular}
				[c]{cc}%
				\includegraphics[height=1.8in]{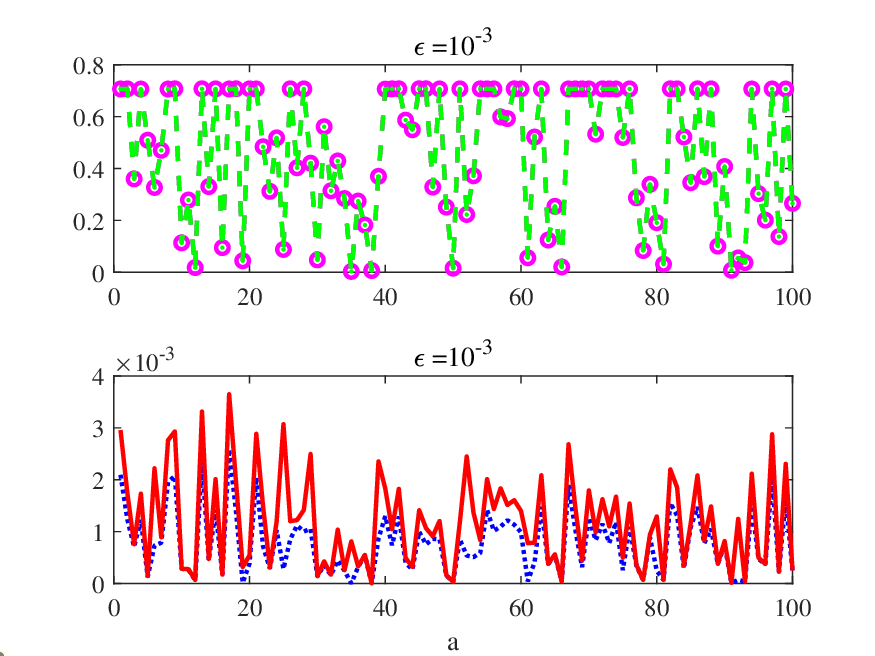}&
				\includegraphics[height=1.8in]{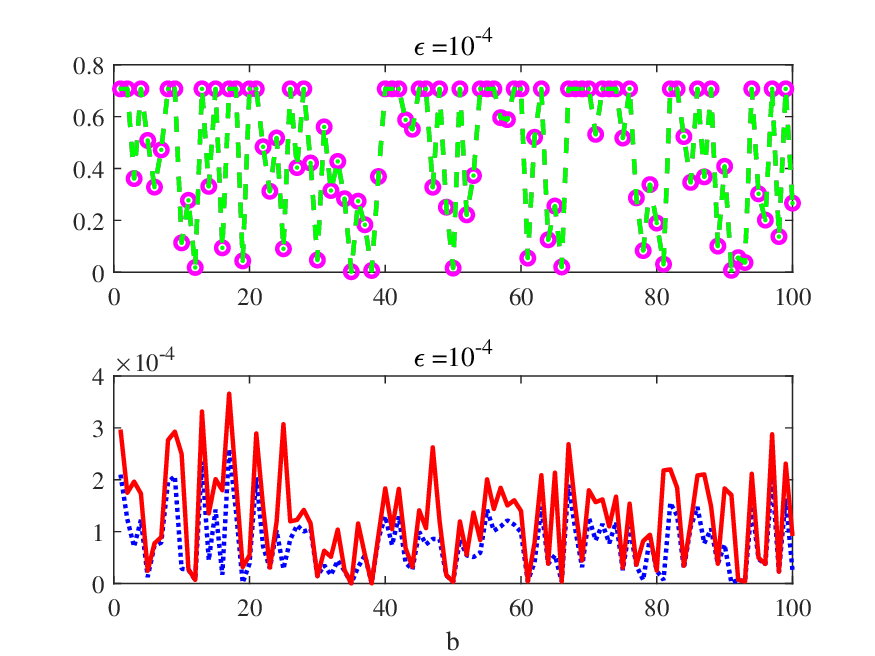}\\
				\includegraphics[height=1.8in]{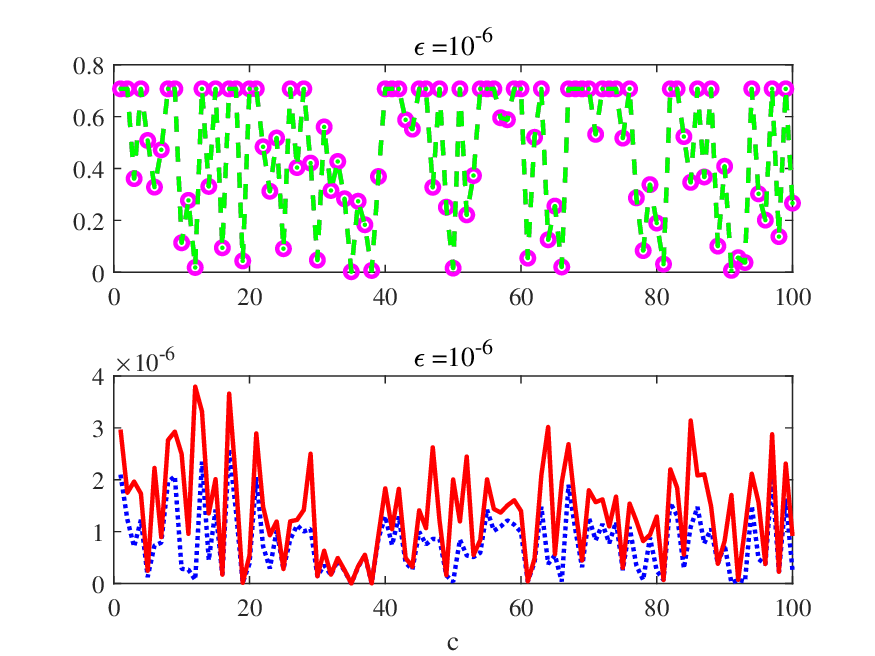}&
				\includegraphics[height=1.8in]{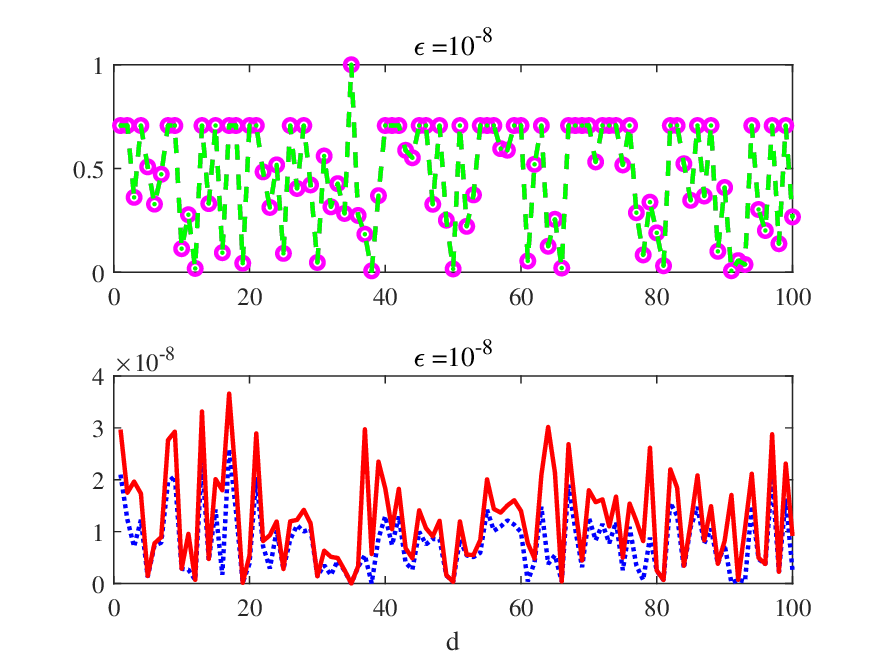}
	\end{tabular}}}
	\caption{Curves of $\sin\angle(\widetilde{x},x_*)$, $\|\widetilde{r}\|$
and $\sin\angle(\widehat{x},x_*)$,
$\|\widehat{r}\|$ for  with each of the four $\varepsilon$ and 100 random perturbation matrices,
and the upper and lower figures in (a)-(d)
  are for $\sin\angle(\widetilde{x},x_*)$, $\|\widetilde{r}\|$ and $\sin\angle(\widehat{x},x_*)$,
  $\|\widehat{r}\|$, respectively, where the `-o' line is the curve of $\sin\angle(\widetilde{x},x_*)$, the
  dashed dot line is the curve of $\|\widetilde{r}\|$, the dotted line is the curve of
  $\sin\angle(\widehat{x},x_*)$, and the solid line is the curve of $\|\widehat{r}\|$.}
	\label{fig1}
\end{figure}

\end{example}

\section{Concluding remarks}\label{sec:concl}

We have studied the convergence of the Rayleigh--Ritz method and the refined
Rayleigh--Ritz method for computing a simple eigenpair $(\lambda_*,x_*)$ of NEP \eqref{NEP}.
In terms of the deviation $\varepsilon$ of
$x_*$ from a given subspace $\mathcal{W}$, we have
established an a priori error estimate for the Ritz value $\mu$ and error bounds for
the Ritz vector $\widetilde{x}$ and the refined Ritz vector $\widehat{x}$, respectively.

We have also derived lower and upper bounds for the error of the
Ritz vector and the refined Ritz vector as well as for the ratio of the
residual norms obtained by the two methods. In the meantime, we
have presented an approach to decide the convergence of an arbitrary
approximate eigenvector, which includes the Ritz vector and the refined Ritz
vector, by checking the computable residual norm. The obtained
bound plays a crucial role in establishing the a priori error bounds for the refined
Ritz vector and the residual norm of the refined Ritz pair.

Our conclusions are that, as $\varepsilon\rightarrow 0$,
(i) there exists a Ritz value $\mu$ that converges to
$\lambda_*$ unconditionally but the Ritz vector $\widetilde{x}$
converges conditionally and it even may not be unique, causing
the failure of the Rayleigh--Ritz method, and (ii) the refined Ritz vector
$\widehat{x}$ is unique and converges unconditionally. Therefore,
the refined Rayleigh--Ritz method is more robust and has better convergence
than the Rayleigh--Ritz method. Our results have nontrivially
extended some of the main results
in \cite{Jia2004,Jia1999,Jia2001} for the linear eigenvalue problem to the NEP.
The results provide necessary theoretical supports for the two kinds of
projection methods for numerical solutions of large NEPs.

We have constructed examples to confirm our theoretical results.
They have illustrated that the refined Ritz vector is unique and is much
more accurate than the Ritz vector and that there are indeed more than one
Ritz vectors to approximate the unique desired eigenvector. If
the Ritz vector is unique, it may have no accuracy, and the residual norm by the
refined Rayleigh--Ritz method is much smaller than that by the
Rayleigh--Ritz method.

There remain some important issues and problems to be considered.
As we have addressed throughout, all the results in this paper
are established under the assumption that $\lambda_*$ is
simple. For the linear eigenvalue problem, this assumption
has been dropped in \cite{Jia2001}, which includes more general
convergence results on the Rayleigh--Ritz method for computing
more than one eigenvalues and the associated eigenspace or
invariant subspace, named as
an eigenblock. For the NEP, one can define invariant pairs \cite{Guttel}, which are
generalizations of eigenblocks.
Effenberger \cite{effenberger2013} proposes a nonlinear Jacobi--Davidson
type method with deflation strategy to successively compute an invariant pair.
Suppose that a given subspace $\mathcal{W}$ contains sufficiently
approximations to the desired invariant subspace. Then how to
extend the convergence results in \cite{Jia2001} and this paper to the
method for the NEP? We will explore these problems and issues in future work.

\section*{Declarations}

The two authors declare that they have no
financial interests, and they read and approved the final manuscript.

\section*{Acknowledgements}
We thank the three reviewers and editor Professor Wen-Wei Lin for their very careful reading of the paper
and for their valuable comments and suggestions, which made us improve the presentation considerably.

%\bibliographystyle{siamplain}
%\bibliography{references}

\end{document}